\def\bR {\mathbf{R}}
\def\bS {\mathbf{S}}
\def\bT {\mathbf{T}}
\def\cB {\mathcal{B}}
\def\cD {\mathcal{D}}
\def\cG {\mathcal{G}}
\def\cH {\mathcal{H}}
\def\cL {\mathcal{L}}
\def\cM {\mathcal{M}}
\def\cP {\mathcal{P}}
\def\cQ {\mathcal{Q}}
\def\cR {\mathcal{R}}
\def\a {{\alpha}}
\def\G {{\Gamma}}
\def\de {{\delta}}
\def\eps {{\epsilon}}
\def\th {{\theta}}
\def\l {{\lambda}}
\def\L {{\Lambda}}
\def\si {{\sigma}}
\def\Si {{\Sigma}}
\def\om {{\omega}}
\def\Om {{\Omega}}
\def\d {{\partial}}
\def\grad {{\nabla}}
\def\Dlt {{\Delta}}
\def\rstr {{\big |}}
\def\indc {{\bf 1}}
\def\la {\langle}
\def\ra {\rangle}
\def\Sh {{\hbox{S\!h}}}
\def\Ma{{\hbox{M\!a}}}
\def\Re{{\hbox{R\!e}}}
\newcommand{\Div}{\operatorname{div}}
\newcommand{\Rot}{\operatorname{curl}}
\newcommand{\Span}{\operatorname{span}}
\newcommand{\Supp}{\operatorname{supp}}
\newcommand{\Ker}{\operatorname{Ker}}
\newcommand{\ba}{\begin{aligned}}
\newcommand{\ea}{\end{aligned}}
\newcommand{\be}{\begin{equation}}
\newcommand{\ee}{\end{equation}}
\newcommand{\lb}{\label}
\newtheorem{Thm}{Theorem}[section]
\newtheorem{Prop}[Thm]{Proposition}
\newtheorem{Cor}[Thm]{Corollary}
\newtheorem{Lem}[Thm]{Lemma}
\newtheorem{Def}[Thm]{Definition}
\begin{document}

\title[Euler Limit of Boltzmann with Accommodation]{The Incompressible Euler Limit\\ of the Boltzmann Equation
\\ with Accommodation Boundary Condition}

\author[C. Bardos]{Claude Bardos}
\address[C.B. \& F.G.]{Universit\'e Paris-Diderot, Laboratoire J.-L. Lions, BP 187, 75252 Paris Cedex 05, France}
\email{claude.bardos@gmail.com}

\author[F. Golse]{Fran\c cois Golse}
\address[F.G.]{Ecole Polytechnique, Centre de Math\'ematiques L. Schwartz, 91128 Palaiseau Cedex France}
\email{francois.golse@math.polytechnique.fr}

\author[L. Paillard]{Lionel Paillard}
\address[L.P.]{Ecole Polytechnique, Centre de Math\'ematiques L. Schwartz, 91128 Palaiseau Cedex France}
\email{lionel.paillard@club-internet.fr}

\begin{abstract}
The convergence of solutions of the incompressible Navier-Stokes equations set in a domain with boundary to solutions of the Euler equations in the large Reynolds 
number limit is a challenging open problem both in 2 and 3 space dimensions. In particular it is distinct from the question of existence in the large 
of a smooth solution of the initial-boundary value problem for the Euler equations. The present paper proposes three results in that direction. First, if 
the solutions of the Navier-Stokes equations satisfy a slip boundary condition with vanishing slip coefficient in the large Reynolds number limit, we 
show by an energy method that they converge to  the classical solution of the Euler equations on its time interval of existence. Next we show that the 
incompressible Navier-Stokes limit of the Boltzmann equation with Maxwell's accommodation condition at the boundary is governed by the 
Navier-Stokes equations with slip boundary condition, and we express the slip coefficient at the fluid level in terms of the accommodation parameter 
at the kinetic level. This second result is formal, in the style of [Bardos-Golse-Levermore, J. Stat. Phys. \textbf{63} (1991), 323--344]. Finally, we 
establish the incompressible Euler limit of the Boltzmann equation set in a domain with boundary with Maxwell's accommodation condition assuming 
that the accommodation parameter is small enough in terms of the Knudsen number. Our proof uses the relative entropy method following closely 
[L. Saint-Raymond, Arch. Ration. Mech. Anal. {\bf 166} (2003), 47--80] in the case of the 3-torus, except for the boundary terms, which require special treatment.
\end{abstract}

\keywords{Navier-Stokes equations; Euler equations; Boltzmann equation; Fluid dynamic limit; Inviscid limit; Slip coefficient; Maxwell's accommodation
boundary condition; Accomodation parameter; Relative entropy method; Dissipative solutions of the Euler equations}

\subjclass{35Q30, 82B40; (76D05, 76B99)}

\maketitle

\rightline{\textit{To C. David Levermore}}

\section{Introduction}

In a program initiated more than 20 years ago with Dave Levermore \cite{BGL0,BGL1,BGL2}, we outlined a strategy for deriving incompressible 
fluid dynamic equations from the theory of renormalized solutions of the Boltzmann equation invented by R. DiPerna and P.-L. Lions \cite{dPL}. 

At the time of this writing, complete derivations of the Stokes \cite{LionsMas1,LionsMas2}, Stokes-Fourier \cite{GL} and Navier-Stokes-Fourier
\cite{GSR1,GSR2,LevMas} have been obtained following that program, in the greatest possible generality allowed by the current existence 
theories for both the fluid dynamic and the Boltzmann equations: see \cite{VilBbki} for a survey of these issues.

The case of the incompressible Euler equations in space dimension 3 stands out, in the first place because there does not exist a satisfactory 
theory of global weak solutions of these equations analogous to Leray's theory of weak solutions of the Navier-Stokes equations \cite{Leray34}
in space dimension 3. Even if there was a global existence theory of weak solutions of the incompressible Euler equations in the energy space 
$L^\infty_t(L^2_x)$ in dimension 3, such solutions would not satisfy the weak-strong uniqueness property observed by Leray in the case of the
Navier-Stokes equations. (Indeed there exist nontrivial compactly supported solutions of the incompressible Euler equations in energy space:
see \cite{Sheffer,Shnirel,deLelSzeke}.) In \cite{LionsBk1}, P.-L. Lions proposed a notion of dissipative solution of the incompressible Euler
equations --- in the same spirit of his definition of the notion of viscosity solutions of Hamilton-Jacobi equations, but using the conservation 
of energy instead of the maximum principle as in the Hamilton-Jacobi case. The weak-strong uniqueness property is verified by dissipative
solutions of the incompressible Euler equations (essentially by definition): if there exists a classical ($C^1$) solution of the incompressible
Euler equations, all dissipative solutions with the same initial data must coincide with this classical solution on its maximal time interval of
existence. Unfortunately, dissipative solutions are not known to satisfy the incompressible Euler equations in the sense of distributions.

Using the relative entropy method pioneered in \cite{HTYau} and adapted to the case of the Boltzmann equation in \cite{BGP,LionsMas2},
L. Saint-Raymond \cite{LSR1,LSR2} succeeded in deriving dissipative solutions of the Euler equations in arbitrary space dimension (or 
classical solutions whenever they exist) from weak solutions of the BGK model \cite{LSR1} or from renormalized solutions of the Boltzmann 
equation \cite{LSR2}.

However, all the derivations of fluid dynamic equations from the Boltzmann equation referred to above are carried out in either the Euclidian
 space $\bR^N$ of the flat torus $\bT^N$ so as to avoid difficulties that may result from boundary conditions. The theory of renormalized
 solutions of the Boltzmann equation in the presence of accommodation boundary conditions was obtained only very recently, by S. Mischler
 \cite{Misch}; subsequently, N. Masmoudi and L. Saint-Raymond established the Stokes-Fourier limit of such solutions \cite{MasSR}.
 
 In the present paper, we derive dissipative solutions (or classical solutions whenever they exist) of the incompressible Euler equations from
 renormalized solutions of the Boltzmann equation in some spatial domain satisfying Maxwell's accommodation boundary condition. In particular,
 we identify a sufficient scaling condition on the accommodation parameter under which the hydrodynamic limit of the family of solutions of the
 Boltzmann equation is governed by the incompressible Euler equation with its classical boundary condition --- i.e. assuming that the velocity
 field is tangent at the boundary.
 
 The outline of the paper is as follows. Section \ref{S-NSEul} gives a sufficient condition on the slip coefficient at the boundary under which the 
 incompressible Euler equations are obtained as the inviscid limit of the incompressible Navier-Stokes equations with slip-boundary condition.
 The main result in this section is Theorem \ref{T-Eul<NS}, based on an energy method. Section \ref{S-BNS} provides a formal derivation of the
 incompressible Navier-Stokes equations with slip boundary condition from the Boltzmann equation with Maxwell's accommodation condition at 
 the boundary of the spatial domain: see Theorem \ref{T-FormNSLim} for a precise statement of this result. Based on the intuition provided by
 sections \ref{S-NSEul} and \ref{S-BNS}, we identify a scaling limit of the Boltzmann equation with Maxwell accommodation boundary condition
 leading to the incompressible Euler equations: see Theorem \ref{T-Eul<B}, whose proof occupies most of section \ref{S-BEul}. 
 
 It is a our great pleasure to offer this modest contribution to our friend Dave Levermore, in recognition of his outstanding influence on the
 analysis of nonlinear partial differential equations in the past 30 years, especially on the problem of hydrodynamic limits of the Boltzmann 
 equation, directly inspired from Hilbert's 6th problem on the axiomatization of physics.
 
\section{Inviscid Limit of the Navier-Stokes Equations\\ with Slip Boundary Conditions}\lb{S-NSEul}

A a warm-up, we begin with a simple observation bearing on the inviscid limit of the incompressible Navier-Stokes equations set in some
smooth domain with  slip boundary condition. In particular, we identify a sufficient scaling condition on the slip coefficient in order to obtain
the incompressible Euler equations in the inviscid limit.

Let $\Om$ designate an open set in $\bR^N$ with $C^1$ boundary $\d\Om$, assuming that $N=2$ or $3$; henceforth the outward unit normal 
vector at the point $x$ of $\d\Om$ is denoted by $n_x$. Consider the initial-boundary value problem with unknown $u_\nu=u_\nu(t,x)$, set for 
$x\in\Om$ and $t\ge 0$:
\be\lb{NSSlip}
\left\{
\ba
{}&\Div_xu_\nu=0\,,
\\
&\d_tu_\nu+\Div_x(u_\nu\otimes u_\nu)+\grad_xp_\nu=\nu\Dlt_xu_\nu\,,
\\
&u_\nu\cdot n\rstr_{\d\Om}=0\,,
\\
&\nu(\Si(u_\nu)\cdot n)_\tau+\l u_\nu\rstr_{\d\Om}=0\,,
\\
&u_\nu\rstr_{t=0}=u^{in}\,,
\ea
\right.
\ee
where $\nu>0$ is the kinematic viscosity, $\l>0$ the slip coefficient, $\Si(u):=\grad_xu+(\grad_xu)^T$, while 
$$
v(x)_\tau:=(I-n(x)^{\otimes 2})\cdot v(x)\,.
$$

Henceforth, we denote
$$
\cH(\Om):=\{v\in L^2(\Om;\bR^N)\,|\,\Div v=0\hbox{ and }v\cdot n\rstr_{\d\Om}=0\}\,.
$$

For each $\nu>0$ and $u^{in}\in\cH(\Om)$, there exists a weak solution $u_\nu$ of (\ref{NSSlip}) in 
$L^\infty(\bR_+;\cH(\Om))\cap L^2(\bR_+;H^1(\Om))$, meaning that, for each test vector field
$U\in C(\bR_+;\cH(\Om))\cap C^\infty_c(\bR_+\times\overline{\Om})$, one has
\be\lb{WeakNS}
\ba
\nu&\int_0^\infty\int_\Om\tfrac12\Si(u_\nu):\Si(U)dxdt+\l\int_0^\infty\int_{\d\Om}u_\nu\cdot UdS(x)dt
\\
&=
\int_0^\infty\int_\Om(u_\nu\cdot\d_tU+u_\nu\otimes u_\nu:\grad_xU)dxdt
+
\int_\Om u^{in}(x)\cdot U(0,x)dx\,,
\ea
\ee
and satisfying in addition $u_\nu\in C(\bR_+;w-L^2(\Om))$, together with the Leray-type energy dissipation inequality:
\be\lb{Leray<}
\ba
\int_\Om\tfrac12|u_\nu(t,x)|^2dx&+\nu\int_0^t\int_\Om|\Si(u_\nu)(t,x)|^2dxdt
\\
&+\l\int_0^t\int_{\d\Om}|u_\nu(t,x)|^2dS(x)dt\le\int_\Om\tfrac12|u^{in}(x)|^2dx
\ea
\ee
for each $t\ge 0$. Such a weak solution of (\ref{NSSlip}) will henceforth be referred to as a ``Leray solution of (\ref{NSSlip})''. The classical theory
of Leray solutions that is well known in the case where the velocity field satisfies the Dirichlet boundary condition on $\d\Om$ can be adapted to
the case of the slip-boundary condition: see \cite{Solo, Beirao}, and Theorem 2 in \cite{IftiSueur}.

Since $u_\nu\in C(\bR_+;w-L^2(\Om))$, by an elementary density argument, one can choose a sequence of test vector fields $U_n$ of the special 
form $U_n(t,x)=\chi_n(T-t)w(t,x)$ where $w\in C(\bR_+;\cH(\Om))\cap C^1_c(\bR_+\times\overline\Om)$ and 
$$
\chi_n(z)=\int_{-\infty}^z\chi_n(s)ds
$$
where $\chi_n'$ is a regularizing sequence on $\bR$, so that the weak formulation (\ref{WeakNS}) of the Navier-Stokes equations becomes
$$
\ba
\nu\int_0^T\int_\Om\tfrac12\Si(u_\nu):\Si(w)dxdt+\l\int_0^T\int_{\d\Om}u_\nu\cdot wdS(x)dt
\\
=\int_\Om u^{in}(x)\cdot w(0,x)dx-\int_\Om u_\nu(T,x)w(T,x)dx
\\
+\int_0^T\int_\Om(u_\nu\cdot\d_tw+u_\nu\otimes u_\nu:\grad_xw)dxdt\,,
\ea
$$
for each $T>0$. Furthermore, denoting
$$
E(w):=\d_tw+w\cdot\grad_xw\,,
$$
one has
$$
\int_0^T\int_\Om u_\nu\cdot\d_twdxdt=\int_0^T\int_\Om (u_\nu\cdot E(w)-u_\nu\otimes w\cdot\grad_xw)dxdt
$$
while
\be\lb{EnergEulTest}
\ba
\int_0^T\tfrac12|w(t,x)|^2dx-\int_0^T\tfrac12|w(0,x)|^2dx+\int_0^T\int_\Om w\otimes w:\grad_xwdxdt
\\
=\int_0^T\int_\Om w\cdot E(w)dxdt\,.
\ea
\ee
Therefore
\be\lb{WSol2}
\ba
\nu\int_0^T\int_\Om\tfrac12\Si(u_\nu):\Si(w)dxdt+\l\int_0^T\int_{\d\Om}u_\nu\cdot wdS(x)dt
\\
=\int_0^T\int_\Om(u_\nu\cdot E(w)+u_\nu\otimes(u_\nu-w):\grad_xw-w\otimes u_\nu:\grad_xw)dxdt
\\
+\int_\Om u^{in}(x)\cdot U(0,x)dx-\int_\Om u_\nu(T,x)w(T,x)dx\,,
\ea
\ee
since
$$
\int_\Om w\otimes u_\nu:\grad_xwdx=\int_\Om\Div_x(u_\nu\tfrac12|w|^2)dx=0\,,
$$
because $u_\nu\cdot n\rstr_{\d\Om}=0$. Combining (\ref{WSol2}) and (\ref{EnergEulTest}), we find that
\be\lb{WSol3}
\ba
\nu\int_0^T\int_\Om\tfrac12\Si(u_\nu):\Si(w)dxdt+\l\int_0^T\int_{\d\Om}u_\nu\cdot wdS(x)dt
\\
=\int_0^T\int_\Om((u_\nu-w)\cdot E(w)+(u_\nu-w)\otimes(u_\nu-w):\grad_xw)dxdt
\\
+\int_\Om u^{in}(x)\cdot w(0,x)dx-\int_\Om\tfrac12|w(0,x)|^2dx
\\
-\int_\Om u_\nu(T,x)w(T,x)dx+\int_\Om\tfrac12|w(T,x)|^2dx
\ea
\ee
Finally, combining (\ref{Leray<}) and (\ref{WSol3}), we conclude that any Leray solution $u_\nu$ of (\ref{NSSlip}) satisfies the inequality
\be\lb{RelEnergNS<}
\ba
\int_\Om\tfrac12|u_\nu-w|^2(t,x)dx+\int_0^t\int_\Om(u_\nu-w)\otimes(u_\nu-w):\grad_xwdxds
\\
+\nu\int_0^t\int_\Om\tfrac12|\Si(u_\nu)(s,x)|^2dxds+\l\int_0^t\int_{\d\Om}|u_\nu(s,x)|^2dS(x)ds
\\
\le\int_\Om\tfrac12|u^{in}(x)-w(0,x)|^2dx+\int_0^t\int_\Om E(w)\cdot(u_\nu-w)dxdt
\\
+\nu\int_0^t\int_\Om\tfrac12\Si(u_\nu):\Si(w)dxds+\l\int_0^t\int_{\d\Om}u_\nu\cdot wdS(x)ds
\ea
\ee
for each $w\in C(\bR_+;\cH(\Om))\cap C^1_c(\bR_+\times\overline\Om)$.

At this point we recall the definition of dissipative solutions of the incompressible Euler equations set in a domain $\Om$ with smooth boundary:
\be\lb{IncEul}
\left\{
\ba
{}&\Div_xu=0\,,
\\
&\d_tu+\Div_x(u\otimes u)+\grad_xp=0\,,
\\
&u\cdot n\rstr_{\d\Om}=0\,,
\\
&u\rstr_{t=0}=u^{in}\,.
\ea
\right.
\ee

\begin{Def}[P.-L. Lions \cite{LionsBk1}, p. 154, C. Bardos, E. Titi \cite{BardTiti}, p. 16]
Given $u^{in}\in\cH(\Om)$, a dissipative solution of (\ref{IncEul}) is an element $u\in C(\bR_+;w-\cH(\Om))$ satisfying $u\rstr_{t=0}=u^{in}$
and the inequality
\be\lb{DefDissSol}
\ba
\int_\Om\tfrac12&|u-w|^2(t,x)dx
\\
&\le\exp\left(\int_0^t2\|\si(w)^-\|_{L^\infty(\Om)}(s)ds\right)\int_\Om\tfrac12|u^{in}(x)-w(0,x)|^2dx
\\
&+\int_0^t\exp\left(\int_s^t2\|\si(w)^-\|_{L^\infty(\Om)}(\tau)d\tau\right)\int_\Om E(w)\cdot(u-w)(s,x)dxds
\ea
$$
for each $w\in C(\bR_+;\cH(\Om))\cap C^1(\bR_+\times\overline\Om)$, where
$$
\si(w)^-(t,x):=\sup_{|\xi|=1}(-\Si(w)(t,x):\xi\otimes\xi)\,.
\ee
\end{Def}

We recall that, if the Euler equations (\ref{IncEul}) have a classical solution $v\in C^1([0,T^*)\times\overline\Om)$ satisfying
$$
\si(v)^-\in L^1([0,T];L^\infty(\Om))\hbox{ and }p\in L^1([0,T];H^1(\Om))\hbox{ for each }T<T^*\,,
$$
then all dissipative solutions of (\ref{IncEul}) must coincide with $v$ on $[0,T^*)\times\Om$ a.e., since one can use $w=v$ as the test vector 
field, so that
$$
\int_\Om E(v)\cdot(u-v)(s,x)dx=-\int_\Om\grad_xp\cdot(u-v)(s,x)dx=0
$$
because $(u-v)(s,\cdot)\in\cH(\Om)$ for each $s\in[0,T)$.

\begin{Thm}\lb{T-Eul<NS}
Let $u^{in}\in\cH(\Om)$, and assume that the slip coefficient $\l\equiv\l(\nu)$ in (\ref{NSSlip}) scales with the kinematic viscosity $\nu$ so that
\be\lb{ScalSlip}
\l(\nu)\to 0\hbox{ as }\nu\to 0\,.
\ee
Then any family $(u_\nu)$ of Leray solutions of (\ref{NSSlip}) is relatively compact in the weak-* topology of $L^\infty(\bR_+;\cH(\Om))$ and
in $C(\bR_+;w-\cH(\Om))$ for the topology of uniform convergence on bounded time intervals, and each limit point of $(u_\nu)$ as $\nu\to 0$
is a dissipative solution of (\ref{IncEul}).
\end{Thm}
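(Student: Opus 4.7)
The plan is to pass to the limit $\nu\to 0$ in the relative entropy inequality (\ref{RelEnergNS<}) and recover the dissipative solution inequality (\ref{DefDissSol}) for each weak limit of $(u_\nu)$. The argument has three ingredients: compactness, an algebraic lower bound for the quadratic term in $u_\nu-w$, and decay of the viscous and slip-boundary contributions enforced by (\ref{ScalSlip}).

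From (\ref{Leray<}) one reads off the uniform bounds
$$\|u_\nu\|_{L^\infty_t(L^2_x)}\le\|u^{in}\|_{L^2}\,,\quad \|\sqrt\nu\,\Si(u_\nu)\|_{L^2_{t,x}}\le C\,,\quad \|\sqrt{\l(\nu)}\,u_\nu\rstr_{\d\Om}\|_{L^2_tL^2_{\d\Om}}\le C.$$
Banach-Alaoglu gives weak-$*$ convergence of a subsequence in $L^\infty(\bR_+;\cH(\Om))$. To strengthen this to compactness in $C(\bR_+;w-\cH(\Om))$ uniformly on bounded time intervals, I would test (\ref{WeakNS}) against time-independent $\phi\in C^\infty_c(\overline\Om;\bR^N)$ with $\Div\phi=0$ and $\phi\cdot n\rstr_{\d\Om}=0$: each right-hand-side term is dominated by the energy bound, so $t\mapsto\int_\Om u_\nu(t,x)\phi(x)\,dx$ is equi-Lipschitz in $\nu$; separability of $L^2(\Om)$ and Arzel\`a-Ascoli then deliver the claim, and the divergence-free and tangency constraints survive the weak limit.

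Next, the symmetry of $(u_\nu-w)\otimes(u_\nu-w)$ yields the pointwise lower bound
$$(u_\nu-w)\otimes(u_\nu-w):\grad_xw=\tfrac12(u_\nu-w)\otimes(u_\nu-w):\Si(w)\ge-\|\si(w)^-\|_{L^\infty(\Om)}|u_\nu-w|^2.$$
Dropping the non-negative viscous and boundary dissipation integrals from the left side of (\ref{RelEnergNS<}) and denoting $e_\nu(t):=\int_\Om\tfrac12|u_\nu-w|^2(t,x)\,dx$, I obtain
$$e_\nu(t)\le e_\nu(0)+\int_0^t 2\|\si(w)^-\|_{L^\infty}e_\nu(s)\,ds+\int_0^t\!\!\int_\Om E(w)\cdot(u_\nu-w)\,dx\,ds+R_\nu(t),$$
where $R_\nu(t)$ gathers the test-function terms $\nu\int_0^t\!\!\int_\Om\tfrac12\Si(u_\nu):\Si(w)$ and $\l(\nu)\int_0^t\!\!\int_{\d\Om}u_\nu\cdot w$. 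Two applications of Cauchy-Schwarz against the bounds from step one give $\sup_{[0,T]}|R_\nu(t)|\le C_{w,T}(\sqrt\nu+\sqrt{\l(\nu)})\to 0$ by (\ref{ScalSlip}).

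The integral Gronwall lemma applied for each fixed $\nu$ produces an inequality that, after a short integration by parts, takes exactly the form of the right side of (\ref{DefDissSol}) plus an error controlled by $\sup_{[0,T]}|R_\nu|\cdot\exp(\int_0^T 2\|\si(w)^-\|_{L^\infty})$, hence $o(1)$. Passing to the limit $\nu\to 0$: the left side is weakly lower semicontinuous in $u_\nu$ at fixed $t$, while the linear pairing $\int_\Om E(w)\cdot(u_\nu-w)\,dx$ converges to $\int_\Om E(w)\cdot(u-w)\,dx$ by the weak convergence from step one, and the time integral passes through by dominated convergence. This establishes (\ref{DefDissSol}) for every $w\in C(\bR_+;\cH(\Om))\cap C^1_c(\bR_+\times\overline\Om)$; multiplication by a scalar time-cutoff supported on $[0,T+1]$ extends the inequality to the class $C^1(\bR_+\times\overline\Om)$ required in the definition. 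The main technical obstacle is keeping $R_\nu(t)$ under control uniformly in $t$ when invoking Gronwall, so that the $\nu\to 0$ passage yields an $o(1)$ rather than $O(1)$ discrepancy; this is exactly what the Cauchy-Schwarz bound on $|R_\nu|$ delivers.
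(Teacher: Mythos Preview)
Your proof is correct and follows essentially the same approach as the paper's: extract the uniform bounds on $\sqrt\nu\,\Si(u_\nu)$ and $\sqrt{\l(\nu)}\,u_\nu\rstr_{\d\Om}$ from the Leray inequality, bound the remainder terms in (\ref{RelEnergNS<}) by Cauchy--Schwarz as $O(\sqrt\nu+\sqrt{\l(\nu)})$, apply Gronwall, and pass to the limit (the paper defers the compactness and limit-passage details to \cite{LionsBk1}, which you spell out). One minor inaccuracy: the map $t\mapsto\int_\Om u_\nu(t)\phi\,dx$ is equi-H\"older-$\tfrac12$ rather than equi-Lipschitz, since the viscous and boundary terms are only controlled in $L^2_t$; this does not affect the Arzel\`a--Ascoli argument.
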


\begin{proof}
We deduce from (\ref{RelEnergNS<}) with $w=0$, or equivalently from the Leray energy inequality that
\be\lb{UnifBndsNS}
\ba
\sqrt\nu\Si(u_\nu)\hbox{ is bounded in }L^2(\bR_+;L^2(\Om))\,,\hbox{ and }&
\\
\sqrt{\l(\nu)}u_\nu\rstr_{\d\Om}\hbox{ is bounded in }L^2(\bR_+;L^2(\d\Om))&\,.
\ea
\ee
By Gronwall's inequality
\be\lb{DissipIneq-nu}
\ba
\int_\Om&\tfrac12|u_\nu-w|^2(t,x)dx
\\
&\le\exp\left(\int_0^t2\|\si(w)^-\|_{L^\infty(\Om)}(s)ds\right)\int_\Om\tfrac12|u^{in}(x)-w(0,x)|^2dx
\\
&+\int_0^t\exp\left(\int_s^t2\|\si(w)^-\|_{L^\infty(\Om)}(\tau)d\tau\right)\int_\Om E(w)\cdot(u_\nu-w)(s,x)dxds
\\
&+\int_0^t\exp\left(\int_s^t2\|\si(w)^-\|_{L^\infty(\Om)}(\tau)d\tau\right)Q_\nu(s)ds
\ea
\ee
where, by the Cauchy-Schwarz inequality 
$$
\ba
Q_\nu(s)&=\nu\|\Si(u_\nu)\|_{L^2(\Om)}(s)\|\Si(w)\|_{L^2(\Om)}(s)
\\
&+\l(\nu)\|u_\nu\|_{L^2(\d\Om)}(s)\|w\|_{L^2(\Om)}(s)\,.
\ea
$$
In view of (\ref{UnifBndsNS}), one has
$$
\|Q_\nu\|_{L^1([0,T])}=O(\sqrt{\nu})+O(\sqrt{\l(\nu)})\to 0
$$
as $\nu\to 0$, and we conclude by passing to the limit in (\ref{DissipIneq-nu}) following the same argument as in \cite{LionsBk1}.
\end{proof}

Several remarks are in order after this result.

In some references, the slip boundary condition is written
\be\lb{SlipNormDer}
\nu\left(\frac{\d u_\nu}{\d n}\right)_\tau+\l u\rstr_{\d\Om}=0\,,
\ee
instead of 
\be\lb{SlipFour}
\nu(\Si(u_\nu)\cdot n)_\tau+\l u\rstr_{\d\Om}=0\,.
\ee
Likewise, the boundary condition
$$
\Rot u_\nu\times n\rstr_{\d\Om}=0
$$
is also considered by some authors --- and referred to as the Navier slip condition --- in the context of the inviscid limit of the Navier-Stokes equations: 
see for instance \cite{JLLNonLin, BardosEuler, Beirao, BeiraoCrispo2}.

If $\d\Om$ is a straight line, or a plane, or a hyperplane in space dimension $N>3$, the normal vector field $n$ is constant, so that
$$
(\Si(u_\nu)\cdot n)_\tau\rstr_{\d\Om}=\left(\frac{\d u_\nu}{\d n}\right)_\tau\rstr_{\d\Om}+\grad^\tau(u\cdot n\rstr_{\d\Om})
	=\left(\frac{\d u_\nu}{\d n}\right)_\tau\rstr_{\d\Om}
$$
and
$$
\ba
\Rot u_\nu\times n\rstr_{\d\Om}&=(\grad_xu_\nu-(\grad_xu)^T)\cdot n\rstr_{\d\Om}
\\
&=\left(\frac{\d u_\nu}{\d n}\right)_\tau\rstr_{\d\Om}-\grad^t(u\cdot n\rstr_{\d\Om})=\left(\frac{\d u_\nu}{\d n}\right)_\tau\rstr_{\d\Om}
\ea
$$
(with $\grad^\tau$ denoting the tangential component of the $\grad$ operator), since the velocity field $u_\nu$ is tangential on $\d\Om$. Therefore, in
the case of a flat boundary, all these boundary conditions are equivalent. 

If $\d\Om$ is a smooth curve, or a surface or a hypersurface in space dimension $N>3$, then
$$
(\Si(u_\nu)\cdot n)_\tau\rstr_{\d\Om}=\left(\frac{\d u_\nu}{\d n}\right)_\tau-\grad^tn\cdot u_\tau\rstr_{\d\Om}
$$
while
$$
\Rot u_\nu\times n\rstr_{\d\Om}=\left(\frac{\d u_\nu}{\d n}\right)_\tau+\grad^\tau n\cdot u_\tau\rstr_{\d\Om}\,,
$$
so that all these boundary conditions differ by a 0-order operator given by the Weingarten endomorphism of the boundary $\d\Om$. 

Here, we have chosen the second boundary condition above, as it is the more natural one when looking at the Navier-Stokes equation as a fluid 
dynamic limit of the kinetic theory of gases.

However, the same argument as in the proof of Theorem \ref{T-Eul<NS} can be extended to treat the case of a slip coefficient $\l$ which is not
nonnegative, provided that
$$
\l(\nu)_+=\max(\l(\nu),0)\to 0\quad\hbox{ and }\l_-(\nu)=\max(-\l(\nu),0)=O(\nu)
$$
as $\nu\to 0$. Indeed, the contribution of $\l(\nu)_-$ in the estimate (\ref{RelEnergNS<}) can be absorbed in the viscous dissipation term by means
of the following classical inequality: for each $\a>0$, there exists $C_\a>0$ such that, for each $v\in H^1(\Om)$,
$$
\int_{\d\Om}|v(x)|^2dS(x)\le\a\int_\Om|\grad_xv(x)|^2dx+\frac{C_\a}{\a}\int_\Om|v(x)|^2dx\,.
$$
With this observation, the term 
$$
(\Si(u_\nu)\cdot n)_\tau\rstr_{\d\Om}
$$ 
can be replaced indifferently with either
$$
\left(\frac{\d u_\nu}{\d n}\right)_\tau\rstr_{\d\Om}\quad\hbox{ or }\quad\Rot u_\nu\times n\rstr_{\d\Om}
$$
in the slip boundary condition.

More precise variants of Theorem \ref{T-Eul<NS} have been established by various authors, see for instance \cite{XiaoXin,BeiraoCrispo1}.
The result given here holds for a very general class of nonnegative slip boundary coefficients $\l$ and is based upon the simplest imaginable
energy estimate. The condition $\l\ge 0$ in (\ref{NSSlip}) is somehow natural when this initial-boundary value problem is considered as some 
scaling limit of the Boltzmann equation of the kinetic theory of gases.

Another question is whether the condition $\l(\nu)\to 0$ as $\nu\to 0$ is optimal. Considers instead the Dirichlet boundary condition for $u_\nu$, 
i.e.
$$
u_\nu\rstr_{\d\Om}=0\,.
$$
Formally, this boundary condition corresponds with any one of the slip boundary conditions above with 
$$
\varliminf_{\nu\to 0}\l(\nu)>0\,.
$$
In that case, it well known that the Euler equations (\ref{IncEul}) may fail to describe the inviscid limit of the Navier-Stokes equations, even in the 
simpler 2 dimensional case. Because the Dirichlet boundary condition overdetermines the velocity field in the inviscid limit, the Euler equations 
(\ref{IncEul}) are expected to govern the inviscid limit of the Navier-Stokes equations only if the effect of viscosity remains confined on a thin layer 
near the boundary. But it may happen --- and does happen under certain circumstances --- that the viscous layer detaches from the boundary, as 
for instance in the case of the so-called von Karman vortex streets in the case of a Navier-Stokes flow past a cylinder, even at moderate Reynolds 
numbers. While this situation seems beyond the grasp of current mathematical analysis, there exists a least a very interesting criterion due to 
T. Kato \cite{KatoBL}, formulated in terms of the viscous energy dissipation only, identifying situations where the inviscid limit of the incompressible 
Navier-Stokes equations with  Dirichlet boundary condition is described by the Euler equations. This suggests that, unless $\l(\nu)\to 0$, the Euler 
equations (\ref{IncEul}) might also fail to govern the inviscid limit of the Navier-Stokes equations with slip boundary conditions (\ref{NSSlip}). 

\section{From the Boltzmann Equation with Accomodation Boundary Condition to the Navier-Stokes Equations with Slip Boundary Conditions}
\lb{S-BNS}

In this section, we revisit the incompressible Navier-Stokes limit for the Boltzmann equation in the case of the initial-boundary value problem.
Our main interest is to understand how the slip boundary condition arises from Maxwell's accommodation boundary condition at the kinetic level in 
the fluid dynamic limit, and especially how the slip coefficient is related to the accommodation parameter. Strictly speaking, this is not needed in the 
proof of the main result in the present paper. Therefore, the discussion in this section will be only formal, along the line of \cite{BGL1}.

Consider the Boltzmann equation with the incompressible Navier-Stokes scaling
\be\lb{BoltzINS}
\eps\d_tF_\eps+v\cdot\grad_xF_\eps=\frac1\eps\cB(F_\eps,F_\eps)\,.
\ee
Here the unknown is the distribution function $F\equiv F(t,x,v)$ that is the density at time $t$ of molecules with velocity $v\in\bR^N$ at the position
$x\in\Om$ with respect to the phase space Lebesgue measure $dxdv$. 

\subsection{Formal structure of the Boltzmann equation}

The Boltzmann collision integral acts only on the $v$ variable in $F_\eps$, keeping $t,x$ as parameters. Its expression for $\phi\in C_c(\bR^N)$ is
\be\lb{CollInt}
\cB(\phi,\phi)(v)=\iint_{\bR^N\times\bS^{N-1}}(\phi(v')\phi(v'_*)-\phi(v)\phi(v_*))b(v-v_*,\om)dv_*d\om
\ee
where $v',v'_*\in\bR^N$ are the velocities of 2 identical particles about to undergo an elastic collision, assuming that their post-collision velocities 
are $v,v_*\in\bR^N$. The set of all possible pre-collision velocities $v',v'_*$ are parametrized by the unit vector $\om$ as follows:
\be\lb{vv*-v'v'*}
\left\{
\ba
v'\equiv\, v'(v,v_*,\om)&:=v-(v-v_*)\cdot\om\om\,,
\\
v'_*\equiv v'_*(v,v_*,\om)&:=v_*\!+\!(v-v_*)\cdot\om\om\,.
\ea
\right.
\ee
The collision kernel $b(z,\om)>0$ is a locally integrable function that satisfies the symmetries
\be\lb{Sym-b}
b(v-v_*,\om)=b(v_*-v,\om)=b(v'-v'_*,\om)
\ee
a.e. in $(v,v_*,\om)$, assuming that $v'$ and $v'_*$ are given in terms of $v,v_*,\om$ by the relations (\ref{vv*-v'v'*}). Depending on the growth of 
the collision kernel $b$ as $|v-v_*|\to+\infty$, the collision integral can be extended by continuity to larger classes of functions than $C_c(\bR^N)$.
Finally, we denote
$$
\cB(F,F)(t,x,v):=\cB(F(t,x,\cdot),F(t,x,\cdot))(v)\,.
$$

The collision integral satisfies the identities
\be\lb{ConsCollInt}
\left\{
\ba
\int_{\bR^N}\cB(\phi,\phi)(v)dv=0\,,
\\
\int_{\bR^N}\cB(\phi,\phi)(v)vdv=0\,,
\\
\int_{\bR^N}\cB(\phi,\phi)(v)|v|^2dv=0\,,
\ea
\right.
\ee
for each $\phi\in C_c(\bR^N)$ or in the larger class allowed by the growth at infinity of the collision kernel $b$. As a result, whenever $F$ is a
classical solution of the scaled Boltzmann equation (\ref{BoltzINS}) with appropriate decay as $|v|\to\infty$, 
\be\lb{LocCons}
\left\{
\ba
\eps\d_t\int_{\bR^N}F_\eps dv+\Div_x\int_{\bR^N}vF_\eps dv=0\,,
\\
\eps\d_t\int_{\bR^N}vF_\eps dv+\Div_x\int_{\bR^N}v^{\otimes 2}F_\eps dv=0\,,
\\
\eps\d_t\int_{\bR^N}\tfrac12|v|^2F_\eps dv+\Div_x\int_{\bR^N}v\tfrac12|v|^2F_\eps dv=0\,,
\ea
\right.
\ee
and these relations are the local conservation laws of mass, momentum and energy respectively.

Whenever $b(v-v_*,\om)$ has polynomial growth as $|v-v_*|\to\infty$, for each positive, rapidly decaying $\phi\in C(\bR^N)$ such 
that $\ln\phi$ has polynomial growth as $|v|\to\infty$, Boltzmann's H Theorem states that
\be\lb{HCollInt<}
\int_{\bR^N}\cB(\phi,\phi)(v)\ln\phi(v)dv\le 0\,,
\ee
and
\be\lb{HCollInt=}
\ba
\int_{\bR^N}\cB(\phi,\phi)(v)\ln\phi(v)dv=0&\Leftrightarrow\cB(\phi,\phi)=0
\\
&\Leftrightarrow\phi\hbox{ is a Maxwellian distribution,}
\ea
\ee
meaning that there exists $\rho,\th>0$ and $u\in\bR^N$ such that
\be\lb{Maxw}
\phi(v)=\cM_{\rho,u,\th}(v):=\frac{\rho}{(2\pi\th)^{N/2}}e^{-\frac{|v-u|^2}{2\th}}
\ee
for all $v\in\bR^N$. As a result, whenever $F$ is a classical solution of the scaled Boltzmann equation  (\ref{BoltzINS}) with appropriate decay 
as $|v|\to\infty$, it satisfies the differential entropy inequality
\be\lb{LocalHThm}
\ba
\eps\d_t\int_{\bR^N}F_\eps\ln F_\eps dv+\Div_x\int_{\bR^N}vF_\eps\ln F_\eps dv&
\\
=-\frac1\eps\int_{\bR^N}\cB(F_\eps,F_\eps)\ln F_\eps dv&\le 0\,.
\ea
\ee

Throughout this paper, we denote 
\be\lb{NotM}
M:=\cM_{1,0,1}\,.
\ee

Since Maxwellians are equilibrium distributions for the collision integral, it is natural to investigate the linearization thereof about a Maxwellian,
say $M$ for simplicity --- the case of an arbitrary Maxwellian being similar. We therefore introduce the linearized collision  operator in the form
$$
\cL_M\phi:=-2M^{-1}\frac{\de\cB(F,F)}{\de F}\rstr_{F=M}\cdot M\phi\,,
$$
i.e.
$$
\cL_M\phi(v):=\iint_{\bR^N\times\bS^{N-1}}(\phi(v)+\phi(v_*)-\phi(v')-\phi(v'_*))b(v-v_*,\om)Mdv_*d\om\,.
$$
Under certain assumptions on the collision kernel $b$, known as Grad's angular cutoff assumption, H. Grad proved in \cite{Grad62} that $\cL_M$
 is an unbounded, self-adjoint Fredholm operator on $L^2(\bR^N;Mdv)$ with domain
$$
\mathrm{D}(\cL_M):=\{\phi\in L^2(\bR^N;Mdv)\,|\,\phi(\overline{b}\star_vM)\in L^2(\bR^N;Mdv)\}\,,
$$
where
$$
\overline{b}(z):=\int_{\bS^{N-1}}b(z,\om)d\om
$$
and $\star_v$ designates the convolution product in the $v$ variable. Moreover, the nullspace of $\cL_M$ is
$$
\Ker\cL_M=\Span\{1,v_1,\ldots,v_n,|v|^2\}\,.
$$
In particular, the tensor field $A(v)=v^{\otimes 2}-\tfrac1N|v|^2$ satisfies $A\bot\Ker\cL_M$, so that, by the Fredholm alternative, there exists a
unique tensor field 
$$
\hat A\in\mathrm{D}(\cL_M)\cap(\Ker\cL)^\bot\quad\hbox{ such that }\cL_M\hat A=A
$$
componentwise.

Henceforth in this paper, we assume that the collision kernel $b$ comes from a hard cutoff potential in the sense of Grad, and more precisely 
that it satisfies, for some constant $C_b>0$ and all $(z,\om)\in\bR^N\times\bS^{N-1}$,
\be\lb{HardCut}
0<b(z,\om)\le C_b(1+|z|)\,,\quad\hbox{ and }\overline{b}(z)\ge\frac1{C_b}\,.
\ee

\subsection{Boundary value problem and fluid dynamic limit}

The incompressible Navier-Stokes limit of the Boltzmann equation bears on solutions of the Boltzmann equation that are of the form
\be\lb{NSScalF}
F_\eps=M(1+\eps g_\eps)\,,
\ee
where it is understood that the relative number density fluctuation $g_\eps$ is $O(1)$ in some sense to be made precise as $\eps\to 0$: see
\cite{BGL0,BGL1,BGL2} for more details, together with physical justifications for this scaling assumption.

Here, the scaled Boltzmann equation (\ref{BoltzINS}) is set on the spatial domain $\Om$, with Maxwell's accommodation at the boundary, that
is assumed to be maintained at the constant temperature $1$. This boundary condition reads
\be\lb{Accom}
\ba
F_\eps(t,x,v)=(1-\a)\cR_x F_\eps(t,x,v)+\a\L_x\left(\frac{F_\eps}{M}\right)(t,x)M(v)\,,
\\
x\in\d\Om\,,\,\,v\cdot n_x<0\,,
\ea
\ee
where 
\be\lb{DefR}
\cR_x F(t,x,v):=F(t,x,v-2v\cdot n_xn_x)
\ee
is the specular reflection operator on the boundary, while
\be\lb{DefLambd}
\L_x\phi\,:=\sqrt{2\pi}\int_{\bR^N}\phi(v)(v\cdot n_x)_+M(v)dv\,.
\ee
In (\ref{Accom}), the parameter $\a$ satisfies $0\le\a\le 1$, and is called the accommodation coefficient. The case $\a=0$ corresponds with 
specular reflection of the gas molecules on $\d\Om$ without thermal exchange, while the case $\a=1$ corresponds with diffuse reflection, 
or total accommodation, in which case gas molecules are instantaneously thermalized at the boundary.

Henceforth, we denote, for each $\phi\in L^1(\bR^N;Mdv)$, 
$$
\la\phi\ra:=\int_{\bR^N}\psi(v)M(v)dv\,.
$$

\begin{Thm}\lb{T-FormNSLim}
Let $(F_\eps)_{\eps>0}$ be a family of solutions of the scaled Boltzmann equation (\ref{BoltzINS}) set on the spatial domain $\Om$, satisfying
the accommodation boundary condition (\ref{Accom}) on $\d\Om$. Assume that the relative fluctuations
$$
g_\eps=\frac{F_\eps-M}{\eps M}\to g
$$
a.e. and in weak-$L^1_{loc}(\bR_+\times\Om\times\bR^N;dtdxMdv)$ (possibly up to extraction of a subsequence), and that
$$
\la|v|^3\indc_{|v|>R}|g_\eps|\ra+\la|\hat A|\indc_{|v|>R}|\cQ(g_\eps,g_\eps)|\ra+\la|\hat A||v|\indc_{|v|>R}|g_\eps|\ra\to 0
$$
in $L^1_{loc}(\bR_+\times\Om)$ as $R\to+\infty$ uniformly in $\eps>0$. Then 
$$
g(t,x,v)=\rho(t,x)+u(t,x)\cdot v+\th(t,x)\tfrac12(|v|^2-N)\,,
$$
where $u$ is a solution of the incompressible Navier-Stokes equations
$$
\left\{
\ba
{}&\Div_xu=0\,,
\\
&\d_tu+\Div_x(u^{\otimes 2})+\grad_xp=\nu\Dlt_xu\,,
\ea
\right.
$$
and where 
$$
\nu=\tfrac{1}{(N-1)(N+2)}\la\hat A:A\ra\,.
$$
Assume further that
$$
\left\{
\ba
{}\la v\cdot n_xg_\eps\rstr_{\d\Om}\ra&\to\la v\cdot n_xg\rstr_{\d\Om}\ra
\\
\la v_\tau (v\cdot n_x)_+g_\eps\rstr_{\d\Om}\ra&\to\la v_\tau (v\cdot n_x)_+g\rstr_{\d\Om}\ra
\ea
\right.
$$
in weak-$L^1_{loc}(\bR_+\times\d\Om)$. Then, the velocity field $u$ satisfies the boundary condition
$$
\left\{
\ba
{}&u\cdot n_x=0\,,&&\quad x\in\d\Om\,,
\\
&\nu(\Si(u)\cdot n_x)_\tau+\l u=0\,,&&\quad x\in\d\Om\,,
\ea
\right.
$$
where the slip coefficient is given by the formula
$$
\l=\tfrac{\a_0}{N-1}\la|v_\tau |^2(v\cdot n_x)_+\ra=\frac{\a_0}{\sqrt{2\pi}}\,.
$$
\end{Thm}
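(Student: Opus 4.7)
The argument splits into a bulk step (an adaptation of the Bardos--Golse--Levermore program to the domain $\Om$) and a boundary step, which is the novel contribution; implicit in the target formula for $\l$ is the scaling $\a = \a_0\eps$. For the bulk, multiply the scaled Boltzmann equation (\ref{BoltzINS}) by $\eps$ and pass to the limit using the moment hypotheses: the collision term is forced to zero, so $\cL_M g = 0$ and hence $g(t,x,v) = \rho + u\cdot v + \th\tfrac12(|v|^2-N)$ since $\Ker\cL_M = \Span\{1,v_1,\ldots,v_N,|v|^2\}$. Taking the moments (\ref{LocCons}) of the Boltzmann equation and passing to the limit gives, via the standard Chapman--Enskog identification of the traceless part of $\la v^{\otimes 2} g_\eps\ra$ through the Fredholm inverse $\hat A$ of $A$, the incompressibility $\Div_x u = 0$, the Boussinesq relation $\rho + \th = 0$, and the Navier--Stokes momentum equation with viscosity $\nu = \tfrac{1}{(N-1)(N+2)}\la\hat A:A\ra$.

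For the boundary, substitute $F_\eps = M(1+\eps g_\eps)$ and $\a = \a_0\eps$ into (\ref{Accom}); using $M\circ\cR_x = M$ and the half-Gaussian identity $\L_x(1)=1$, the zeroth-order terms cancel exactly, and after dividing by $\eps M$ we obtain, for $x\in\d\Om$ and $v\cdot n_x<0$,
$$
g_\eps(t,x,v) - \cR_x g_\eps(t,x,v) = \a_0\eps\Big[\sqrt{2\pi}\la(v\cdot n_x)_+ g_\eps\ra - \cR_x g_\eps(t,x,v)\Big] + O(\eps^2).
$$
Passing to the limit yields $g = \cR_x g$ on $\d\Om$; inserting the infinitesimal Maxwellian form and using $\cR_x v = v - 2(v\cdot n_x) n_x$ produces $g(v) - g(\cR_x v) = 2(v\cdot n_x)(u\cdot n_x)$, which must vanish for all $v\cdot n_x\ne 0$, forcing the impermeability $u\cdot n_x = 0$.

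For the tangential slip condition, multiply the reformulated identity by $v_\tau(v\cdot n_x)_- M$ and integrate in $v$: the specular substitution $v\mapsto\cR_x v$ (which preserves $M$ and $v_\tau$ while flipping $v\cdot n_x$) recombines the two half-space integrals into one over $\bR^N$ and produces
$$
\la v_\tau(v\cdot n_x)\, g_\eps\ra = \a_0\eps\la v_\tau(v\cdot n_x)_+ g_\eps\ra + O(\eps^2),
$$
the summand involving $\sqrt{2\pi}\la(v\cdot n_x)_+g_\eps\ra$ on the right vanishing by tangential rotational symmetry. Since $\la v_\tau(v\cdot n_x)g\ra = 0$ by parity (using $u\cdot n=0$), the LHS is of order $\eps$. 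A Chapman--Enskog expansion $g_\eps = g - \eps\tfrac12\hat A:\Si(u) + (\hbox{heat-flux and nonlinear terms}) + \cdots$ shows that the heat-flux terms produce moments of odd functions and vanish, and the nonlinear $\cQ(g,g)$ contribution vanishes because $u\cdot n = 0$ kills the relevant quadratic cross-terms. Using the rotation-invariant representation $\hat A(v) = \a(|v|) A(v)$, the remaining moment reduces to $\la v_\tau(v\cdot n_x) g_1\ra = -\nu(\Si(u)\cdot n)_\tau$, while $\la v_\tau(v\cdot n_x)_+ g\ra = u_\tau/\sqrt{2\pi}$ by a direct Gaussian moment computation. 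Matching orders gives the slip condition $\nu(\Si(u)\cdot n)_\tau + \tfrac{\a_0}{\sqrt{2\pi}} u = 0$ on $\d\Om$.

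The principal obstacle is this last tensor-moment identification: carefully tracking which terms in the Chapman--Enskog correction (heat flux, nonlinear $\cQ(g,g)$, and kernel parts) contribute to the tangential-normal moment $\la v_\tau(v\cdot n_x) g_1\ra$, and checking, via the isotropic fourth-moment identity $\la\a(|v|) v_i^2 v_j^2\ra = \nu$ for $i\ne j$ (coming from the isotropic-tensor representation of $\la\a(|v|) v_iv_jv_kv_l\ra$), that the scalar $\nu$ arising here coincides with the bulk viscosity $\tfrac{1}{(N-1)(N+2)}\la\hat A:A\ra$.
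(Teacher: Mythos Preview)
Your bulk argument (the form of $g$ and the interior Navier--Stokes equations via the BGL moment method) agrees with the paper. The boundary analysis, however, follows a genuinely different route and relies on an assumption that the theorem does not provide.

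Your derivation of the slip condition is a pointwise Hilbert/Chapman--Enskog argument at the boundary: from the exact identity $\la v_\tau(v\cdot n_x)g_\eps\ra\big|_{\d\Om}=\a\la v_\tau(v\cdot n_x)_+g_\eps\ra\big|_{\d\Om}$ you read the right-hand side at order~$\eps$ using the leading boundary moment of $g$, and you read the left-hand side at order~$\eps$ by substituting the bulk expansion $g_\eps=g+\eps g_1+\cdots$ with $g_1=-\tfrac12\hat A:\Si(u)+\cdots$, arriving at $\la v_\tau(v\cdot n_x)g_1\ra=-\nu(\Si(u)\cdot n)_\tau$. This computation is correct \emph{once} you grant that the bulk Chapman--Enskog corrector describes the boundary trace of $g_\eps-g$ to order~$\eps$. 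But the theorem's hypotheses only assume the convergence of $\la v\cdot n_xg_\eps\ra$ and $\la v_\tau(v\cdot n_x)_+g_\eps\ra$ on $\d\Om$ --- nothing about the next-order trace --- and in general a Knudsen layer can separate the boundary value of $g_\eps$ from its bulk expansion at order~$\eps$. So as a proof of the theorem \emph{as stated}, there is a genuine gap.

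The paper sidesteps this entirely by working with the weak formulation. It integrates the momentum conservation law against a divergence-free tangential test field $w$; Green's formula produces the boundary integral $\int_{\d\Om}w\otimes n_x:\tfrac1\eps\la Ag_\eps\ra\,dS$, which the accommodation identity converts \emph{exactly} into $\tfrac{\a}\eps\int_{\d\Om}w\cdot\la v_\tau(v\cdot n_x)_+g_\eps\ra\,dS$. With $\a/\eps\to\a_0$ and the assumed boundary-trace convergence, this passes to the limit using only the stated hypotheses. The viscous boundary term $\nu(\Si(u)\cdot n)_\tau$ then appears not from a boundary expansion of $g_\eps$, but from integrating by parts the \emph{bulk} limit $\int_\Om\grad w:(u^{\otimes 2}-\nu\Si(u))\,dx$ after the passage $\eps\to 0$, under the regularity assumption $u\in C^2(\overline\Om)$. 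In short: the paper never evaluates $\tfrac1\eps\la Ag_\eps\ra$ on $\d\Om$; you do. Your approach is essentially the Hilbert-expansion derivation of Aoki--Inamuro--Onishi that the paper cites as an alternative, and it yields the same formula, but it lies outside the hypothesis set of this particular theorem.
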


\begin{proof}
Observe that, under the substitution $v\mapsto w=v-2v\cdot nn$, one has, for each unit vector $n$,
$$
\int_{\bR^N}\phi(v-2v\cdot nn)(v\cdot n)_-Mdv=\int_{\bR^N}\phi(w)(w\cdot n)_+Mdw\,,
$$
so that, for each $x\in\d\Om$, one has
$$
\ba
\int_{\bR^N}F_\eps(t,x,v)v\cdot n_xdv&=-\a\int_{\bR^N}F_\eps(t,x,v)(v\cdot n_x)_+dv
\\
&+\a\L_x\left(\frac{F_\eps}{M}\right)(t,x)\int_{\bR^N}M(v)(v\cdot n_x)_-dv=0\,,
\ea
$$
since
$$
\int_{\bR^N}M(v)(v\cdot n_x)_+dv=\int_{\bR^N}M(v)(v\cdot n_x)_-dv=\frac1{\sqrt{2\pi}}\,.
$$
Hence, for each $x\in\d\Om$ and each $\eps>0$, one has
$$
\la vg_\eps\ra(t,x)\cdot n_x=\frac1{\eps}\int_{\bR^N}F_\eps(t,x,v)v\cdot n_xdv=0\,,
$$
so that, after passing to the limit as $\eps\to 0$, 
$$
u(t,x)\cdot n_x=\la vg\ra(t,x)\cdot n_x=0\,,\quad t>0\,,\,\,x\in\d\Om\,.
$$

Next write the local conservation of momentum --- the second local conservation law in (\ref{LocCons}) in the form
$$
\d_t\la vg_\eps\ra+\Div_x\frac1\eps\la Ag_\eps\ra+\grad_x\frac1\eps\la\tfrac1N|v|^2g_\eps\ra=0\,,
$$
where
$$
A\equiv A(v):=v^{\otimes 2}-\tfrac1N|v|^2\,.
$$
Let now $w\equiv w(x)\in\bR^N$ designate a compactly supported $C^1$ vector field on $\bR^N$ satisfying
$$
\Div w=0\,,\quad\hbox{ and }w(x)\cdot n_x=0\,,\,\,x\in\d\Om\,.
$$
Taking the inner product of both sides of the local conservation of momentum with $w$ and integrating over $\Om$ leads to
\be\lb{ConsMomIntegr}
\ba
\d_t\int_\Om w\cdot\la vg_\eps\ra dx&+\int_{\d\Om} w\otimes n_x:\frac1\eps\la Ag_\eps\ra dS(x)
\\
&-\int_{\Om} \grad w:\frac1\eps\la Ag_\eps\ra dx=0\,,
\ea
\ee
since
$$
\ba
\int_\Om w\cdot\grad_x\frac1\eps\la\tfrac1N|v|^2g_\eps\ra dx&=\int_\Om\Div_x\left(\frac1\eps\la\tfrac1N|v|^2g_\eps\ra\cdot w\right)dx
\\
&=\int_{\d\Om}\frac1\eps\la\tfrac1N|v|^2g_\eps\ra w\cdot n_xdS(x)=0\,.
\ea
$$

Next we pass to the limit in each term appearing in (\ref{ConsMomIntegr}): following the analysis in \cite{BGL0,BGL1}, one finds that
$$
\int_\Om w\cdot\la vg_\eps\ra dx\to\int_\Om w\cdot\la vg\ra dx=\int_\Om w\cdot udx
$$
while
$$
\ba
\int_\Om \grad w:\frac1\eps\la Ag_\eps\ra dx&\to\int_\Om\grad w:(A(u)-\nu\Si(u))dx
\\
&=\int_\Om\grad w:(u^{\otimes 2}-\nu\Si(u))dx\,.
\ea
$$
(Indeed, since $w$ is divergence-free, $\grad w:\tfrac1N|u|^2=\tfrac1N|u|^2\Div_xu=0$.)

It remains to analyze the boundary term
$$
\int_{\d\Om} w\otimes n_x:\frac1\eps\la Ag_\eps\ra dS(x)\,.
$$
Since $w\cdot n_x=0$ on $\d\Om$, 
$$
w\otimes n_x:\la Ag_\eps\ra=w\otimes n_x:\la v^{\otimes 2}g_\eps\ra=\la v_\tau v\cdot n_xg_\eps\ra\cdot w\,.
$$
At this point, we decompose the boundary term into the contribution of gas molecules about to collide and those having just collided with
the boundary
$$
\la v_\tau v\cdot n_x g_\eps\ra=\la v_\tau v\cdot n_x \indc_{v\cdot n_x>0}g_\eps\ra+\la v_\tau v\cdot n_x \indc_{v\cdot n_x<0}g_\eps\ra
$$
and use the accommodation condition to write
$$
\la v_\tau v\cdot n_x \indc_{v\cdot n_x<0}g_\eps\ra=\la v_\tau v\cdot n_x \indc_{v\cdot n_x<0}((1-\a)\cR_xg_\eps+\a\L_x(g_\eps))\ra\,.
$$
Observing that
$$
\ba
\la v_\tau v\cdot n_x \indc_{v\cdot n_x<0}\cR_xg_\eps\ra&=\la\cR_x(v_\tau v\cdot n_x \indc_{v\cdot n_x>0})g_\eps\ra
\\
&=-\la v_\tau v\cdot n_x \indc_{v\cdot n_x>0}((1-\a)g_\eps+\a\L_x(g_\eps))\ra\,,
\ea
$$
we conclude that
$$
\la v_\tau v\cdot n_x \indc_{v\cdot n_x<0}g_\eps\ra=-\la v_\tau v\cdot n_x \indc_{v\cdot n_x>0}((1-\a)g_\eps+\a\L(g_\eps))\ra
$$
so that 
$$
\la v_\tau v\cdot n_x g_\eps\ra=\a\la v_\tau v\cdot n_x\indc_{v\cdot n_x>0}(g_\eps-\L_x(g_\eps))\ra=\a\la v_\tau v\cdot n_x\indc_{v\cdot n_x>0}g_\eps\ra
$$
--- where the second equality follows from the fact that the function $v\mapsto v_\tau v\cdot n_x\indc_{v\cdot n_x>0}(g_\eps-\L_x(g_\eps)$ is odd
in $v_\tau $. 

Therefore the boundary term appearing in (\ref{ConsMomIntegr}) becomes
$$
\int_{\d\Om} w\otimes n_x:\frac1\eps\la Ag_\eps\ra dS(x)=\frac{\a}{\eps}\int_{\d\Om} w\cdot\la v_\tau v\cdot n_x\indc_{v\cdot n_x>0}g_\eps\ra dS(x)\,.
$$
Assume that $\a\equiv\a(\eps)$ varies with $\eps$ so that $\a(\eps)/\eps\to\a_0$ as $\eps\to 0$. Since 
$$
g_\eps\to g=\rho+u\cdot v+\th\tfrac12(|v|^2-N)
$$
and we already know that 
$$
u\cdot n_x=0\quad\hbox{ on }\d\Om\,,
$$
one has
$$
\ba
w\cdot\la v_\tau v\cdot n_x\indc_{v\cdot n_x>0}g_\eps\ra&\to\la v_\tau ^{\otimes 2}(v\cdot n_x)_+\ra:u_t\otimes w
\\
&=\tfrac1{N-1}\la|v_\tau |^2(v\cdot n_x)_+\ra u\cdot w\,.
\ea
$$

Thus, passing to the limit in (\ref{ConsMomIntegr}) leads to
\be\lb{MotEqIntegr}
\ba
\d_t\int_\Om w\cdot u dx&+\tfrac{\a_0}{N-1}\la|v_\tau |^2(v\cdot n_x)_+\ra\int_{\d\Om} u\cdot w dS(x)
\\
&-\int_{\Om} \grad w:(u^{\otimes 2}-\nu\Si(u))dx=0\,.
\ea
\ee
(Notice that the term $\la|v_\tau |^2(v\cdot n_x)_+\ra$ is independent of $x$ and therefore comes out of the boundary integral.)

Whenever $u(t,\cdot)\in C^2(\overline\Om)$, applying Green's formula transforms the last integral above into
$$
\ba
\int_\Om\grad w:(u^{\otimes 2}-\nu\Si(u))dx=&-\int_\Om w\cdot(\Div_x(u^{\otimes 2})-\nu\Div_x(\Si(u)))dx
\\
&+\int_{\d\Om}w\cdot uu\cdot n_xdS(x)
\\
&-\nu\int_{\d\Om}w\cdot(\Si(u)\cdot n)dS(x)\,.
\ea
$$
Since $u\cdot n_x=0$ on $\d\Om$, the second integral on the right-hand side above vanishes, and since $\Div_xu=0$, one has
$\Div_x(\Si(u))=\Dlt_xu$, so that
$$
\ba
\int_\Om\grad w:(u^{\otimes 2}-\nu\Si(u))dx=&-\int_\Om w\cdot(\Div_x(u^{\otimes 2})-\nu\Dlt_xu)dx
\\
&-\nu\int_{\d\Om}w\cdot(\Si(u)\cdot n)dS(x)\,.
\ea
$$
Thus, if $u\in C^2([0,T]\times\overline\Om)$, the equality (\ref{MotEqIntegr}) becomes
$$
\ba
\d_t&\int_\Om w\cdot u dx+\int_\Om w\cdot(\Div_x(u^{\otimes 2})-\nu\Dlt_xu)dx
\\
&+\tfrac{\a_0}{N-1}\la|v_\tau |^2(v\cdot n_x)_+\ra\int_{\d\Om} u\!\cdot\!w dS(x)\!+\!\nu\int_{\d\Om}w\!\cdot\!(\Si(u)\!\cdot\! n)dS(x)=0\,.
\ea
$$
This identity holds, say, for each $w\in C^\infty_c(\overline\Om;\bR^N)$. In particular, it holds for each $w\in C^\infty_c(\Om;\bR^N)$, 
which implies that
$$
\d_tu+\Div_x(u^{\otimes 2})-\nu\Dlt_xu=-\grad_xp
$$
in the sense of distributions (for some $p\in\cD'(\bR_+^*\times\Om)$). Since the velocity field $u\in C^2([0,T]\times\overline\Om)$ we 
conclude that $p\in C^1([0,T]\times\overline\Om)$. Substituting this in the identity above with $w\in C^\infty_c(\overline\Om;\bR^N)$ 
gives
$$
-\int_\Om w\cdot\grad_xpdx+\int_{\d\Om}(\nu\Si(u)\cdot n_x+\tfrac{\a_0}{N-1}\la|v_\tau |^2(v\cdot n_x)_+\ra u)\cdot wdS(x)=0
$$
and since, by Green's formula,
$$
-\int_\Om w\cdot\grad_xpdx=-\int_\Om\Div_x(pw)dx=\int_{\d\Om}pw\cdot n_xdS(x)=0\,,
$$
we conclude that 
$$
\nu(\Si(u)\cdot n_x)_\tau+\tfrac{\a_0}{N-1}\la|v_\tau |^2(v\cdot n_x)_+\ra u=0\quad\hbox{ on }\d\Om\,.
$$

In other words, (\ref{MotEqIntegr}) is the weak formulation of
$$
\left\{
\ba
{}&\d_tu+\Div_x(u^{\otimes 2})-\nu\Dlt_xu=-\grad_xp\,,&&\quad x\in\Om\,,&&\,\,t>0\,,
\\
&\nu(\Si(u)\cdot n_x)_\tau+\l u=0\,,&&\quad x\in\d\Om\,,&&\,\,t>0\,,
\\
&u\cdot n_x=0\,,&&\quad x\in\d\Om\,,&&\,\,t>0\,. 
\ea
\right.
$$
with 
$$
\l=\tfrac{\a_0}{N-1}\la|v_\tau |^2(v\cdot n_x)_+\ra=\frac{\a_0}{\sqrt{2\pi}}\,.
$$
\end{proof}

\smallskip
The argument above is a proof of the (formal) Navier-Stokes limit Theorem \ref{T-FormNSLim} by a moment method analogous to the one used
in \cite{BGL1}. As far as we know, the first derivation of this slip boundary condition, in the steady, linearized regime (i.e. leading to the Stokes 
equations in the fluid limit), is due to K. Aoki, T. Inamuro and Y. Onishi \cite{Aoki} (see especially formula (33) in that reference). That derivation 
uses a Hilbert expansion method (formal series expansion of the solution of the Boltzmann equation in powers of the Knudsen number $\eps$). 
The interested reader is referred to the recent book by Y. Sone \cite{SoneBk2} (in particular to \S 3.7 there) for a systematic study of boundary 
conditions in the context of the fluid dynamic limit of the Boltzmann equation.

For a complete proof of the derivation of the same slip boundary condition as in Theorem \ref{T-FormNSLim} in the linearized regime --- i.e. in a
situation where the limiting equation is the Stokes, instead of the Navier-Stokes equations --- the reader is referred to the work of N. Masmoudi 
and L. Saint-Raymond \cite{MasSR}.

\section{From the Boltzmann Equation with Accomodation Boundary Condition to the Incompressible Euler Equations}\lb{S-BEul}

In this section, we consider the Boltzmann equation in the scaling leading to the incompressible Euler equations in the fluid dynamic limit.
We recall from \cite{BGL0,BGL1} that this scaling is
\be\lb{BoltzIEul}
\eps\d_tF_\eps+v\cdot\grad_xF_\eps=\frac1{\eps^{1+q}}\cB(F_\eps,F_\eps)\,,\quad (x,v)\in\Om\times\bR^N\,,
\ee
with $q>0$, while the distribution function $F_\eps$ is sought in the same form (\ref{NSScalF}) as in the Navier-Stokes limit. This scaled
Boltzmann equation is supplemented with Maxwell's accommodation condition on $\d\Om$, with accommodation coefficient $\a\equiv\a(\eps)$ 
driven by the small parameter $\eps$:
\be\lb{AccomEul}
\ba
F_\eps(t,x,v)=(1-\a(\eps))\cR_x F_\eps(t,x,v)+\a(\eps)\L\left(\frac{F_\eps}{M}\right)(t,x)M(v)\,,
\\
x\in\d\Om\,,\,\,v\cdot n_x<0\,,
\ea
\ee
and with the initial condition
\be\lb{BoltzCondin}
F_\eps(0,x,v)=F^{in}_\eps(x,v)\,,\quad (x,v)\in\Om\times\bR^N\,.
\ee

The formal result presented in Theorem \ref{T-FormNSLim} suggests that, in the limit as $\eps\to 0$, the velocity field
$$
\lim_{\eps\to 0}\frac1\eps\int_{\bR^N}vF_\eps dv
$$
should behave like the solution of the incompressible Navier-Stokes equations with kinematic viscosity of order $\eps^q$ and with slip
boundary condition with slip coefficient of the order of $\a(\eps)/\eps$. Thus, if $\a(\eps)=o(\eps)$, Theorem \ref{T-Eul<NS} suggests that
this velocity field should satisfy the incompressible Euler equations (\ref{IncEul}). In fact, the formal result in Theorem \ref{T-FormNSLim} 
is only a guide for our intuition, and we shall give a direct proof of the Euler limit starting from the Boltzmann equation with accommodation
boundary condition without using the Navier-Stokes limit.

\subsection{Renormalized solutions and a priori estimates}

Global solutions of the Cauchy problem for the Boltzmann equation for initial data of arbitrary size have been constructed by R. DiPerna and 
P.-L. Lions \cite{dPL}. Their theory of renormalized solutions was extended to the initial boundary value problem by S. Mischler \cite{Misch}.
His result is summarized below --- see also section 2 in \cite{MasSR} and section 2.3.2 of \cite{LNLSR}.

\begin{Thm}[Mischler]\lb{T-ThmMisch}
Let $F^{in}_\eps\equiv F^{in}_\eps(x,v)\ge 0$ a.e. on $\Om\times\bR^N$ be a measurable function satisfying
$$
\iint_{\Om\times\bR^N}(1+|v|^2+|\ln F^{in}_\eps(x,v)|)F^{in}_\eps(x,v)dxdv<+\infty\,.
$$
There exists $F_\eps\in C(\bR_+;L^1(\Om\times\bR^N))$ satisfying the initial condition (\ref{BoltzCondin}), and the Boltzmann equation 
(\ref{BoltzIEul}) together with the boundary condition (\ref{AccomEul}) in the renormalized sense, meaning that, for each $\G\in C^1(\bR_+)$ 
such that $Z\mapsto\sqrt{1+Z}\G'(Z)$ is bounded on $\bR_+$, the function 
$$
\G'\left(\frac{F_\eps}{M}\right)\cB(F_\eps,F_\eps)\in L^1_{loc}(\bR_+\times\overline\Om\times\bR^N)
$$
and
$$
\ba
\int_0^\infty\iint_{\Om\times\bR^N}\G\left(\frac{F_\eps}{M}\right)(\eps\d_t+v\cdot\grad_x)\phi Mdvdxdt
\\
+\frac1{\eps^{1+q}}\int_0^\infty\iint_{\Om\times\bR^N}\G'\left(\frac{F_\eps}{M}\right)\cB(F_\eps,F_\eps)\phi dvdxdt
\\
=
\int_0^\infty\iint_{\d\Om\times\bR^N}\G\left(\frac{F_\eps}{M}\right)\phi v\cdot n_xMdvdS(x)dt
\\
-\eps\iint_{\Om\times\bR^N}\G\left(\frac{F^{in}_\eps}{M}\right)\phi\rstr_{t=0}Mdvdx
\ea
$$
for each $\phi\in C^1_c(\bR_+\times\overline\Om\times\bR^N)$. 

Moreover

\smallskip
\noindent
a) the trace of $F_\eps$ on $\d\Om$ satisfies the accommodation boundary condition
$$
F_\eps\rstr_{\d\Om}(t,x,v)=(1-\a)\cR_x(F_\eps\rstr_{\d\Om})(t,x,v)+\L_x\left(\frac{F_\eps\rstr_{\d\Om}}{M}\right)(t,x)M(v)
$$
for a.e. $(t,x,v)\in\bR_+\times\d\Om\times\bR^N$ such that $v\cdot n_x>0$;

\noindent
b) the distribution function $F_\eps$ satisfies the local conservation law of mass
$$
\eps\d_t\int_{\bR^N}F_\eps dv+\Div_x\int_{\bR^N}vF_\eps dv=0
$$
with boundary condition
$$
\int_{\bR^N}F_\eps(t,x,v)v\cdot n_xdv=0\,,\quad x\in\d\Om\,,\,\,t>0\,;
$$

\noindent
c) the  distribution function $F_\eps$ satisfies the relative entropy inequality
$$
\ba
H(F_\eps|M)(t)&-H(F^{in}_\eps|M)
\\
\le&-\frac1{\eps^{2+q}}\int_0^t\int_\Om\cP_\eps(s,x)dxds-\frac\a\eps\int_0^t\int_{\d\Om}\cD\cG_\eps(s,x)dxds
\ea
$$
for each $t>0$, where the following notations have been used: for each $f,g$ measurable on $\Om\times\bR^N$ such that $f\ge 0$ and $g>0$ a.e. ,
the relative entropy is
$$
H(f|g):=\iint_{\Om\times\bR^N}h\left(\frac{f}{g}-1\right)gdvdx
$$
with
$$
h(z):=(1+z)\ln(1+z)-z\,,
$$
while the entropy production rate per unit volume is
$$
\cP_\eps:=\iiint_{\bR^N\times\bR^N\times\bS^{N-1}}r\left(\frac{F'_\eps F'_{\eps *}}{F_\eps F_{\eps *}}-1\right)F_\eps F_{\eps*}b(v-v_*,\om)dvdv_*d\om
$$
with
$$
r(z):=z\ln(1+z)\ge 0\,,
$$
and the Darrozes-Guiraud information is
$$
\cD\cG_\eps:=\frac1{\sqrt{2\pi}}\left(\L_x\left(h\left(\frac{F_\eps}{M}-1\right)\right)-h\left(\L_x\left(\frac{F_\eps}{M}-1\right)\right)\right)\,.
$$
In particular $H(f|g)\ge 0$ since $h\ge 0$ on $[-1,+\infty)$ and $\cP_\eps\ge 0$ a.e. on $\bR_+\times\Om$ since $r\ge 0$ on $(-1,+\infty)$, while
$\cD\cG_\eps\ge 0$ a.e. on $\bR_+\times\d\Om$ since $h$ is convex and $\L_x$ is the average with respect to a probability measure;

\noindent
d) for each $T>0$ and each compact $K\subset\d\Om$, there exists $C_{K,T}>0$ such that, for each $\eps>0$, one has
$$
\int_0^T\iint_{K\times\bR^N}F_\eps(t,x,v)(v\cdot n_x)^2M(v)dvdS(x)dt\le C_{K,T}\,,\quad \eps>0\,.
$$
\end{Thm}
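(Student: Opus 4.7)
The plan is to follow the DiPerna--Lions renormalization scheme \cite{dPL} and adapt it to the initial--boundary value problem, following Mischler \cite{Misch}. I would start from a regularized problem: truncate the collision kernel $b$ at level $n$ so that the corresponding operator $\cB_n$ becomes a locally Lipschitz map on $L^1\cap L^\infty$; truncate the initial datum $F^{in}_\eps$ from above and below by multiples of $M$; and regularize the boundary operator $\L_x$ by composition with a mollifier in $x$ tangentially. For the truncated problem, a fixed-point argument in a function space built on the stream lines (backward characteristics of $v\cdot\grad_x$ in $\Om$, reflected according to the convex combination $(1-\a)\cR_x+\a\L_x$) yields classical global solutions $F_{\eps,n}\geq 0$. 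The boundary condition is built into the fixed-point space, so it holds pointwise in the trace sense.

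Next I would establish the a priori bounds which are stable under the limit $n\to\infty$. The key one is the relative entropy inequality in (c). By construction, multiplying the equation by $\ln(F_{\eps,n}/M)$ and integrating produces the interior entropy production $\cP_{\eps,n}/\eps^{2+q}$. The boundary contribution is
\[
-\frac1{\eps}\int_{\d\Om}\int_{\bR^N}\ln\!\left(\frac{F_{\eps,n}}{M}\right)F_{\eps,n}\,v\cdot n_x\,dvdS(x),
\]
and the Maxwell condition combined with Darrozes' classical computation gives the nonnegative term $(\a/\eps)\int_{\d\Om}\cD\cG_{\eps,n}dS(x)$, using convexity of $h$ and Jensen's inequality applied through $\L_x$. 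From here the global bound on $H(F_{\eps,n}|M)$, together with the lower bound $h(z)\geq\tfrac14 z^2\indc_{|z|\le 1}+\tfrac14|z|\indc_{|z|>1}$, yields weak compactness of $F_{\eps,n}$ in $L^1_{loc}$, and control of $\cP_{\eps,n}$ yields the $L^1$-compactness of the collision integral after truncation (DiPerna--Lions). Local conservation of mass together with the vanishing normal flux (b) follow from (a) by the same substitution $v\mapsto v-2v\cdot n_xn_x$ used in the proof of Theorem~\ref{T-FormNSLim}.

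The main obstacle is the handling of traces: to even write the boundary condition and the flux identity in (b), one needs that $F_{\eps,n}\rstr_{\d\Om}$ is well defined and that one can pass to the limit. For this I would invoke the kinetic trace theory of Ukai (and its $L^1$ extensions by Mischler): since $F_{\eps,n}$ solves a transport equation in $(t,x)$ with a bounded-in-$L^1$ right-hand side, its trace on $\bR_+\times\d\Om\times\{v\cdot n_x\neq 0\}$ exists in $L^1_{loc}(|v\cdot n_x|dvdS(x)dt)$, and the Green identity holds. The $L^2$ trace estimate (d) then comes from exploiting the Darrozes--Guiraud term: because $h(z)\geq \tfrac14 z^2$ for $|z|\le1$ and the truncation of $F_\eps/M-1$ is controlled, one reduces to bounding $\int(F_\eps/M-\L_x(F_\eps/M))^2(v\cdot n_x)^2Mdv$ on compact pieces of $\d\Om$, which is exactly what $\cD\cG_\eps/\eps$ multiplied by $\eps$ delivers via the entropy inequality together with the flux control from (b).

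Finally I would pass to the limit $n\to\infty$. Weak-$L^1$ compactness of $F_{\eps,n}$ and velocity averaging lemmas \cite{dPL} produce strong $L^1_{loc}$ compactness of $\int F_{\eps,n}\phi(v)dv$ for $\phi\in C_c(\bR^N)$, which combined with the $L^1$ equi-integrability coming from $H(F_{\eps,n}|M)$ suffices to pass to the limit in $\G(F_{\eps,n}/M)$ for admissible $\G$ and in $\G'(F_{\eps,n}/M)\cB_n(F_{\eps,n},F_{\eps,n})$, giving the renormalized formulation. Stability of the boundary condition under this limit uses the trace compactness above together with the fact that the reflection operator $\cR_x$ and the diffuse operator $\L_x$ are both linear and continuous on the trace space. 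All a priori estimates transfer to the limit $F_\eps$ by lower semicontinuity of convex integrals, which yields (a)--(d).
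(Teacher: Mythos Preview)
The paper does not prove the existence statement or parts (a)--(c); these are simply quoted from Mischler \cite{Misch}. The only thing the paper actually proves is the trace estimate (d), so your sketch of the DiPerna--Lions--Mischler construction, while reasonable as an outline, is not what you are being asked to compare against.

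Your argument for (d) has a genuine gap. You propose to extract the bound on
\[
\int_0^T\!\!\iint_{K\times\bR^N}F_\eps(v\cdot n_x)^2\,dv\,dS(x)\,dt
\]
from the Darrozes--Guiraud information $\cD\cG_\eps$. This cannot work, for three independent reasons. First, $\cD\cG_\eps$ is built from the measure $\sqrt{2\pi}(v\cdot n_x)_+M\,dv$ and therefore carries the weight $(v\cdot n_x)_+$, not $(v\cdot n_x)^2$; there is no mechanism by which the Jensen gap in $\L_x$ produces an extra factor of $(v\cdot n_x)$. Second, $\cD\cG_\eps$ controls only the \emph{fluctuation} $F_\eps/M-\L_x(F_\eps/M)$, not $F_\eps$ itself; you still need an independent bound on the outgoing mass flux $\L_x(F_\eps/M)$, and in fact the paper proves that the logical dependence goes the other way --- its Lemma on the outgoing mass flux bounds $\L_x(F_\eps)$ in terms of $\cD\cG_\eps$ \emph{plus} the very quantity $\int F_\eps(v\cdot n_x)^2dv$ that (d) provides. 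Third, the entropy inequality gives $\int\!\!\int\cD\cG_\eps\,dS\,dt\le(\eps/\a)H(F^{in}_\eps|M)$, and since (d) must hold uniformly in $\eps$ for arbitrary accommodation $\a\in(0,1]$, the bound degenerates as $\a\to 0$ (in particular for pure specular reflection there is no Darrozes--Guiraud control at all, yet (d) is still required).

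The paper's proof of (d) is completely different and does not touch $\cD\cG_\eps$. One works at the level of the approximating solutions $F_{\eps,m}$ (truncated kernel, normalized collision integral), which genuinely satisfy local conservation of momentum. One then chooses a compactly supported $C^1$ vector field $\xi$ with $\xi(x)=a(x)n_x$ on $\d\Om$, $a\ge 0$, $a\equiv 1$ on $K$, and tests the momentum law against $\xi$. The boundary term is exactly
\[
\int_0^T\!\!\iint_{\d\Om\times\bR^N}a(x)(v\cdot n_x)^2F_{\eps,m}\,dv\,dS(x)\,dt\ \ge\ \int_0^T\!\!\iint_{K\times\bR^N}(v\cdot n_x)^2F_{\eps,m}\,dv\,dS(x)\,dt,
\]
while the interior terms are dominated by $\iint(1+|v|^2)F_{\eps,m}\,dv\,dx$, which is controlled uniformly via Young's inequality $\tfrac14(1+|v|^2)\eps|g_{\eps,m}|\le h(\eps|g_{\eps,m}|)+h^*(\tfrac14(1+|v|^2))$ and the entropy bound $H(F_{\eps,m}|M)\le H(F^{in}_\eps|M)$. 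Passing to the limit $m\to\infty$ gives (d). The essential idea you are missing is that the $(v\cdot n_x)^2$ weight is produced by testing momentum conservation against a \emph{normal} vector field, not by any boundary-entropy mechanism.
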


Statement d) appears in \cite{MasSR}, without proof. We give a brief justification for this estimate below.

\smallskip
Notice that in general, renormalized solutions of the initial-boundary value problem (\ref{BoltzIEul})-(\ref{AccomEul})-(\ref{BoltzCondin}) are not
known to satisfy the local conservation law of momentum --- see equation (2.35) in \cite{LNLSR} for a variant involving a defect measure, following
an earlier remark due to P.-L. Lions and N. Masmoudi.

At variance, any classical solution $F_\eps\in C(\bR_+\times\overline\Om\times\bR^N)$ is continuously differentiable in $(t,x)$ and such that 
\be\lb{DecayAssump}
\ba
v\mapsto\sup_{0\le t\le T\atop|x|\le R}(|F_\eps(t,x,v)|+|\d_tF_\eps(t,x,v)|+|\grad_xF_\eps(t,x,v)|)&
\\
\hbox{ is rapidly decaying as $|v|\to+\infty$}&\,,
\ea
\ee
one has
$$
\eps\d_t\int_{\bR^N}vF_\eps dv+\Div_x\int_{\bR^N}v^{\otimes 2}F_\eps dv=0\,,\quad x\in\Om\,,\,\,t>0\,.
$$
Moreover, for each $w\in C^1_c(\bR_+\times\overline\Om)$, Green's formula implies that
\be\lb{ConsMom+Green}
\ba
\int_0^t\iint_{\Om\times\bR^N}(\eps v\cdot\d_tw(s,x)+v^{\otimes 2}:\grad_xw(s,x))F_\eps(s,x,v)dvdxds&
\\
=\int_0^t\int_{\d\Om\times\bR^N}v^{\otimes 2}:w(s,x)\otimes n_xF_\eps(s,x,v)dvdS(x)ds&
\\
+\eps\iint_{\Om\times\bR^N}w(t,x)\cdot vF_\eps(t,x,v)dvdx&
\\
-\eps\iint_{\Om\times\bR^N}w(0,x)\cdot vF^{in}_\eps(x,v)dvdx&\,.
\ea
\ee
Let us use the accommodation condition (\ref{AccomEul}) to reduce the boundary integral:
$$
\ba
\int_{\bR^N}v^{\otimes 2}:w(t,x)\otimes n_xF_\eps(s,x,v)dv
\\
=
\int_{\bR^N}(w(s,x)\!\cdot\!v)(v\!\cdot\!n_x)_+F_\eps(s,x,v)dv
\\
-
\int_{\bR^N}(w(s,x)\!\cdot\!v)(v\!\cdot\!n_x)_-F_\eps(s,x,v)dv
\ea
$$
and, whenever $w$ is tangential on $\d\Om$, one has
$$
\ba
\int_{\bR^N}(w(s,x)\!\cdot\!\!\cdot\!n_x)_-F_\eps(s,x,v)dv&
\\
=
\int_{\bR^N}(w(s,x)\!\cdot\!v)(v\!\cdot\!n_x)_-\left((1-\a)\cR_xF_\eps+\a\L_x\left(\frac{F_\eps}{M}\right)M\right)(s,x,v)dv&
\\
=
\int_{\bR^N}(w(s,x)\!\cdot\!v)(v\!\cdot\!n_x)_+\left((1-\a)F_\eps+\a\L_x\left(\frac{F_\eps}{M}\right)M\right)(s,x,v)dv&\,.
\ea
$$
Therefore
$$
\ba
\int_{\bR^N}v^{\otimes 2}:w(s,x)\otimes n_xF_\eps(s,x,v)dv
\\
=
\a\int_{\bR^N}(w(s,x)\cdot v)(v\cdot n_x)_+F_\eps(s,x,v)dv
\\
-
\a\L_x\left(\frac{F_\eps}{M}\right)\int_{\bR^N}(w(s,x)\cdot v)(v\cdot n_x)_+Mdv
\ea
$$
and the last integral vanishes since the integrand is odd in the tangential component of $v$. 

Finally, whenever $w\in C^1_c(\bR_+\times\overline\Om)$ is tangential on $\d\Om$, one has
$$
\ba
\int_{\bR^N}v^{\otimes 2}:w(s,x)\otimes n_xF_\eps(s,x,v)dv
\\
=\a\int_{\bR^N}(w(s,x)\cdot v)(v\cdot n_x)_+F_\eps(s,x,v)dv\,.
\ea
$$
Therefore, each classical solution $F_\eps\in C(\bR_+\times\overline\Om\times\bR^N)$ of the initial-boundary value problem 
(\ref{BoltzIEul})-(\ref{AccomEul})-(\ref{BoltzCondin}) that is continuously differentiable in $(t,x)$ and satisfies (\ref{DecayAssump}) also verifies
 \be\lb{ConsMom}
\ba
\int_0^t\iint_{\Om\times\bR^N}(\eps v\cdot\d_tw(s,x)+v^{\otimes 2}:\grad_xw(s,x))F_\eps(s,x,v)dvdxds& 
\\
=\a\int_0^t\int_{\d\Om\times\bR^N}(w(s,x)\cdot v)(v\cdot n_x)_+F_\eps(s,x,v)dvdS(x)ds&
\\
+\eps\iint_{\Om\times\bR^N}w(t,x)\cdot vF_\eps(t,x,v)dvdx&
\\
-\eps\iint_{\Om\times\bR^N}w(0,x)\cdot vF^{in}_\eps(x,v)dvdx&\,.
\ea
\ee

Henceforth, we shall consider exclusivey renormalized solutions of the initial-boundary value problem 
(\ref{BoltzIEul})-(\ref{AccomEul})-(\ref{BoltzCondin}) satisfying the identity (\ref{ConsMom}) for each vector field $w\in C^1_c(\bR_+\times\overline\Om)$ 
tangential on $\d\Om$ and such that $\Div_xw=0$ on $\Om$.

Now for estimate d) in Theorem \ref{T-ThmMisch}.

\begin{proof}[Proof of estimate d)]
A renormalized solution of the initial-boundary value problem (\ref{BoltzIEul})-(\ref{AccomEul})-(\ref{BoltzCondin}) can be constructed as the
limit for $\eps>0$ fixed and $m\to+\infty$, of solutions $F_{\eps,m}$ of the approximating equation
\be\lb{BoltzIEulApprox}
\eps\d_tF_{\eps,m}+v\cdot\grad_xF_{\eps,m}=\frac1{\eps^{1+q}}\frac{\cB_m(F_{\eps,m},F_{\eps,m})}{1+\displaystyle\frac1m\int_{\bR^N}F_{\eps,m}dv}\,,
	\quad (x,v)\in\Om\times\bR^N\,,
\ee
with the same initial and boundary conditions (\ref{AccomEul})-(\ref{BoltzCondin}) satisfied by $F_{\eps,m}$, where the approximate collision 
integral is given by the same expression as Boltzmann's collision integral with collision kernel $b$ replaced with its truncated variant $b_m$
defined as\footnote{For all $x,y\in\bR$, the notation $x\wedge y$ designates $\min(x,y)$.}
$$
b_m(v-v_*,\om):=m\wedge b(v-v_*,\om)\,.
$$

Let $\xi$ be a compactly supported $C^1$ vector field satisfying
$$
\xi(x)=a(x)n_x\hbox{ for each }x\in\d\Om\,,\hbox{ with }a\ge 0\hbox{ on }\d\Om\hbox{ and }a=1\hbox{ on }K\,.
$$
Since the approximate collision integral in (\ref{BoltzIEulApprox}) is normalized with an \textit{average} of $F_{\eps, m}$ with respect to $v$, all
solutions of that equation satisfy equality (\ref{ConsMom+Green}) for any $w\in C^1_c(\bR_+\times\bR^N)$, i.e. the local conservation of momentum.
In other words,
$$
\ba
\int_0^t\iint_{\Om\times\bR^N}(v^{\otimes 2}:\grad_x\xi(x))F_{\eps,m}(s,x,v)dvdxds&
\\
=\int_0^t\iint_{\d\Om\times\bR^N}(v\cdot\xi(x))(v\cdot n_x)F_{\eps,m}(s,x,v)dvdS(x)ds&
\\
+\eps\iint_{\Om\times\bR^N}\xi(x)\cdot vF_{\eps,m}(t,x,v)dvdx&
\\
-\eps\iint_{\Om\times\bR^N}\xi(x)\cdot vF^{in}_{\eps,m}(x,v)dvdx&\,.
\ea
$$
Therefore, since $F_{\eps,m}\ge 0$ a.e. and $(v\cdot\xi(x))(v\cdot n_x)=a(x)(v\cdot n_x)^2\ge 0$ for each $x\in\d\Om$ and $v\in\bR^N$,
one has
\be\lb{ConsMom-xi}
\ba
0\le\int_0^t\iint_{K\times\bR^N}(v\cdot n_x)^2F_{\eps,m}(s,x,v)dvdS(x)ds&
\\
\le
\int_0^t\iint_{\d\Om\times\bR^N}(v\cdot\xi(x))(v\cdot n_x)F_{\eps,m}(s,x,v)dvdS(x)ds&
\\
=
\int_0^t\iint_{\Om\times\bR^N}(v^{\otimes 2}:\grad_x\xi(x))F_{\eps,m}(s,x,v)dvdxds&
\\
+\eps\iint_{\Om\times\bR^N}\xi(x)\cdot vF^{in}_{\eps,m}(x,v)dvdx&
\\
-\eps\iint_{\Om\times\bR^N}\xi(x)\cdot vF_{\eps,m}(t,x,v)dvdx\,.&
\ea
\ee

At this point, we recall that the function $h:\,z\mapsto(1+z)\ln(1+z)-z$ introduced in Theorem \ref{T-ThmMisch} has Legendre dual
$$
h^*(\zeta)=\sup_{z>-1}(\zeta z-h(z))=e^{\zeta}-\zeta-1\,.
$$
By Young's inequality --- or equivalently, by definition of $h^*$ (see for instance \cite{BGL2}) --- one has
$$
\tfrac14(1+|v|^2)\eps|g_{\eps,n}|\le h(\eps|g_{\eps,n}|)+h^*(\tfrac14(1+|v|^2))
$$
so that, for each nonnegative $\chi\in C_c(\bR^N)$, 
\be\lb{Young<}
\ba
\iint_{\Om\times\bR^N}&\chi(x)(1+|v|^2)F_{\eps,m}(t,x,v)dvdx
\\
&=\iint_{\Om\times\bR^N}\chi(x)(1+|v|^2)(1+\eps g_{\eps,m})(t,x,v)M(v)dvdx
\\
&\le\iint_{\Om\times\bR^N}\chi(x)(1+h^*(\tfrac14(1+|v|^2))M(v)dvdx
\\
&+\|\chi\|_{L^\infty}H(F_{\eps,m}|M)(t)
\\
&\le\iint_{\Om\times\bR^N}\chi(x)(1+h^*(\tfrac14(1+|v|^2))M(v)dvdx
\\
&+\|\chi\|_{L^\infty}H(F^{in}_{\eps}|M)\,.
\ea
\ee
since the relative entropy estimate c) in Theorem \ref{T-ThmMisch} is also satisfied by the approximate solution $F_{\eps,m}$.

Since $\xi$ is compactly supported, putting together (\ref{ConsMom-xi}), (\ref{Young<}), and letting $m\to+\infty$ leads to estimate d). 
\end{proof}

\subsection{The Euler limit}

Let $u^{in}\in\cH(\Om)$, and pick initial data $F^{in}_\eps$ for the Boltzmann equation satisfying
\be\lb{BoltzCondinEul}
\frac1{\eps^2}H(F^{in}_\eps|\cM_{1,\eps u^{in},1})\to 0\quad
\ee

\begin{Thm}\lb{T-Eul<B}
For each $\eps>0$, let $F_\eps$ be a renormalized solution of (\ref{BoltzIEul})-(\ref{AccomEul})-(\ref{BoltzCondin}) satisfying the local momentum
conservation law (\ref{ConsMom}) for each $w\in C^1_c(\bR_+\times\overline\Om)$ satisfying $\Div_xw=0$ and $w\cdot n_x\rstr_{\d\Om}=0$.
Assume that the accommodation parameter $\a$ in the accommodation condition (\ref{AccomEul}) at the boundary depends on the scaling parameter
$\eps$ in such a way that
$$
\a(\eps)=o(\eps)\hbox{ as }\eps\to 0\,.
$$
Then, for each compact $K\subset\overline\Om$, the family
$$
\frac1\eps\int_{\bR^N}vF_\eps dv
$$
is relatively compact in $L^\infty(\bR_+;L^1(K))$, and each of its limit points as $\eps\to 0$ is a dissipative solution of the Euler equations (\ref{IncEul}).
\end{Thm}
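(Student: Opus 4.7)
The plan is to adapt the relative-entropy (modulated-entropy) method of Saint-Raymond \cite{LSR2} to Maxwell's accommodation boundary condition, the novelty being the treatment of the boundary flux terms under the scaling $\a(\eps) = o(\eps)$. The central object is the modulated entropy
\[
\cZ_\eps(t) := \tfrac{1}{\eps^2} H\bigl(F_\eps(t) \,\big|\, \cM_{1,\eps w(t),1}\bigr)
\]
associated to a smooth divergence-free test field $w \in C^1(\bR_+ \times \overline\Om)$ satisfying $w \cdot n_x|_{\d\Om} = 0$. Since $\ln \cM_{1,\eps w,1} - \ln M = \eps\, v\cdot w - \tfrac{\eps^2}{2}|w|^2$ and $\cM_{1,\eps w,1}$ is a probability density in $v$, there is the algebraic identity
\[
\cZ_\eps(t) = \tfrac{1}{\eps^2}H(F_\eps(t)|M) - \tfrac{1}{\eps}\int_\Om w(t,x)\cdot \int_{\bR^N} v F_\eps\, dv\, dx + \tfrac{1}{2}\int_\Om |w(t,x)|^2 \int_{\bR^N} F_\eps\, dv\, dx.
\]
Specialising to $w(0) = u^{in}$ and invoking (\ref{BoltzCondinEul}), the same identity yields $\cZ_\eps(0) \to \tfrac12\int_\Om |u^{in} - w(0)|^2 dx$ as $\eps \to 0$, which is the initial datum appearing in (\ref{DefDissSol}).

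Next I would write the time evolution of each of the three contributions on the right. The relative entropy inequality (c) of Theorem \ref{T-ThmMisch} controls the first, supplying the volume dissipation $\cP_\eps/\eps^{4+q}$ and the Darrozes--Guiraud boundary dissipation $\a(\eps)\cD\cG_\eps/\eps^3$. The weak momentum identity (\ref{ConsMom}) applied to $w$ and divided by $\eps^2$ governs the second; it produces a convective flux $\eps^{-2}\int v^{\otimes 2}{:}\grad_x w\, F_\eps$ together with the boundary residue $\a(\eps)\eps^{-2}\int_{\d\Om}\!\int (w\cdot v)(v\cdot n_x)_+ F_\eps$. Local mass conservation (b), the normal-flux boundary condition $\int v\cdot n_x F_\eps\, dv = 0$ on $\d\Om$, and integration by parts handle the third. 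Summing, exploiting $\Div_x w = 0$, and regrouping the convective flux with the Maxwellian reference so as to form the modulated momentum flux $\Phi_\eps(w) := \eps^{-2}\int(v-\eps w)^{\otimes 2}(F_\eps - \cM_{1,\eps w,1})\, dv$, I expect a modulated-entropy inequality of the shape
\begin{multline*}
\cZ_\eps(t) + \tfrac{1}{\eps^{4+q}}\int_0^t\!\!\int_\Om \cP_\eps\, dxds + \tfrac{\a(\eps)}{\eps^3}\int_0^t\!\!\int_{\d\Om}\cD\cG_\eps\, dSds
\\
\le \cZ_\eps(0) + \int_0^t\!\!\int_\Om E(w)\cdot(w - u_\eps)\, dxds
\\
- \int_0^t\!\!\int_\Om \grad_x w : \Phi_\eps(w)\, dxds + \cB_\eps(t,w),
\end{multline*}
with $u_\eps := \eps^{-1}\int v F_\eps\, dv$, $E(w) := \d_t w + w\cdot\grad_x w$, and $\cB_\eps$ collecting the boundary residues.

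For the modulated flux term I would use the pointwise bound $-\grad_x w:z^{\otimes 2} \le 2\|\si(w)^-\|_{L^\infty_x}\cdot \tfrac12|z|^2$ together with the standard comparison between $\eps^{-2}\int \tfrac12|v-\eps w|^2(F_\eps - \cM_{1,\eps w,1})\, dv$ and $\cZ_\eps$, obtained by splitting the fluctuation $F_\eps/\cM_{1,\eps w,1} - 1$ according to whether it is $\le 1$ or $>1$: on the bounded regime the inequality $h(z) \ge \tfrac{z^2}{2(1+z/3)}$ gives the quadratic bound, while on its complement the excess is absorbed by the entropy production $\cP_\eps$ thanks to the hard-cutoff assumption (\ref{HardCut}) and the moment-based techniques of \cite{LSR2,GSR2}. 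This step is the most burdened by the renormalization but does not interact with the boundary. Gronwall's lemma then produces, after extracting a subsequence $u_\eps \wto u \in L^\infty(\bR_+;\cH(\Om))$ using the Young-type bound (\ref{Young<}),
\[
\tfrac12\!\int_\Om\!|u-w|^2(t,x)dx \le e^{\int_0^t 2\|\si(w)^-\|_\infty ds}\tfrac12\!\int_\Om\!|u^{in}-w(0)|^2 dx + \!\int_0^t \!\!e^{\int_s^t 2\|\si(w)^-\|_\infty d\tau}\!\int_\Om \!\!E(w)\!\cdot\!(u-w)\, dxds,
\]
which is precisely (\ref{DefDissSol}); the tangency $u\cdot n_x|_{\d\Om} = 0$ and incompressibility $\Div_x u = 0$ pass to the limit from parts (a)-(b) of Theorem \ref{T-ThmMisch}, and the lower-semicontinuity bound $\tfrac12\int|u-w|^2(t)\,dx \le \liminf_\eps \cZ_\eps(t)$ is standard.

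The main obstacle is the control of the boundary residue $\cB_\eps$. Its leading component, inherited directly from the boundary term of (\ref{ConsMom}), is
\[
\cB_\eps(t,w) = \tfrac{\a(\eps)}{\eps^2}\int_0^t\!\!\int_{\d\Om}\!\int_{\bR^N}(w\cdot v)(v\cdot n_x)_+ F_\eps\, dv\, dS\, ds.
\]
The parity argument used in the proof of Theorem \ref{T-FormNSLim} (tangency of $w$ makes $v\mapsto (w\cdot v_\tau)(v\cdot n_x)_+ M$ odd in $v_\tau$) eliminates the equilibrium part $M$ of $F_\eps$, leaving a fluctuation integral that is uniformly $\eps$-bounded by Mischler's estimate (d). Hence $\cB_\eps = O(\a(\eps)/\eps) = o(1)$. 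This is precisely the threshold at which the slip term identified in Theorem \ref{T-FormNSLim} becomes negligible, turning the formal Navier-type boundary condition into the pure Euler tangency condition, in complete agreement with the warm-up Theorem \ref{T-Eul<NS}. Were $\liminf \a(\eps)/\eps > 0$ instead, this residue would survive and the method would at best yield the Euler equations with a nontrivial Navier slip boundary condition rather than tangency.
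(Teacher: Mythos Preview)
Your overall strategy---form the modulated entropy $\cZ_\eps$, combine statement c) of Theorem~\ref{T-ThmMisch} with mass conservation and the momentum identity (\ref{ConsMom}) to obtain a relative-entropy inequality, then close by Gronwall as in \cite{LSR2}---is exactly the paper's route. The gap is in your control of the boundary residue $\cB_\eps$.

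After the parity cancellation you are left with
\[
\cB_\eps(t,w)=\frac{\a(\eps)}{\eps}\int_0^t\!\!\int_{\d\Om}\!\int_{\bR^N}(w\cdot v)(v\cdot n_x)_+\,g_\eps M\,dv\,dS\,ds\,,
\]
and you assert that the triple integral is $O(1)$ ``by Mischler's estimate (d)''. But (d) bounds only
\[
\int_0^T\!\!\int_{K}\!\int_{\bR^N}F_\eps(v\cdot n_x)^2\,dv\,dS\,dt\,,
\]
i.e.\ the \emph{normal}-velocity second moment of $F_\eps$ itself; it gives no control on the fluctuation $g_\eps=(F_\eps-M)/(\eps M)$ at the boundary, nor on any tangential moment. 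Since $w$ is tangent to $\d\Om$, the weight $(w\cdot v)(v\cdot n_x)_+$ couples a tangential factor $|v_\tau|$ with the normal factor $(v\cdot n_x)_+$, and there is no a~priori bound on $\int|v_\tau|(v\cdot n_x)_+F_\eps\,dv$ (hence none on $\eps^{-1}$ times it) from (d) alone. The only available control of $g_\eps$ on $\d\Om$ is the Darrozes--Guiraud information $\cD\cG_\eps$, and that control is nonlinear in $g_\eps$.

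The paper closes this gap in two steps. First (Lemma~\ref{L-BoundIneq}), a Young inequality with the translated entropy $l(z):=h(z+\eps\L_x(g_\eps))-h(\eps\L_x(g_\eps))-h'(\eps\L_x(g_\eps))z$, whose Legendre dual is $(1+\eps\L_x(g_\eps))h^*$, yields
\[
\frac{\a}{\eps}\int_{\bR^N}(w\cdot v)(v\cdot n_x)_+g_\eps M\,dv
\;\le\;\frac{\a}{2\eps^3}\cD\cG_\eps\;+\;\frac{\a}{\eps}C(w)\,\L_x(F_\eps)\,,
\]
and the first term on the right is absorbed by the Darrozes--Guiraud dissipation already present on the left of the entropy inequality. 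Second, a separate lemma bounds the outgoing mass flux $\L_x(F_\eps)$ pointwise in terms of $\cD\cG_\eps$ and $\int F_\eps(v\cdot n_x)^2\,dv$; only at this last stage does (d) enter, showing that
\[
\frac{\a(\eps)}{\eps}\int_0^t\!\!\int_{\d\Om\cap\Supp w}\!\int_{\bR^N}F_\eps(v\cdot n_x)^2\,dv\,dS\,ds\to 0
\]
under $\a(\eps)=o(\eps)$. So (d) is used to control $\L_x(F_\eps)$, not $g_\eps$ directly, and two intermediate lemmas stand between the raw boundary residue and (d). Your one-line invocation of (d) does not substitute for them.
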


\smallskip
Assume that $u^{in}\in\cH(\Om)$ is smooth enough so that the initial-boundary value problem for the Euler equations (\ref{IncEul}) has a classical 
solution on some finite time interval $[0,T]$ --- for instance $u^{in}\in H^s(\Om)$ with $s>\tfrac{N}2+1$, or $u^{in}\in C^{1,\th}$ with $0<\th<1$. In 
that case, the convergence result above can be strengthened with the notion of entropic convergence, invented by Dave Levermore specifically
to handle such problems.

\begin{Def}[C. Bardos, F. Golse, C.D. Levermore \cite{BGL2}]
A family $g_\eps\equiv g_\eps(x,v)$ of $L^1_{loc}(\Om\times\bR^N;Mdxdv)$ is said to converge entropically at order $\eps$ to $g\equiv g(x,v)$ as 
$\eps\to 0$ if the following conditions hold

\smallskip
\noindent
(i) $1+\eps g_\eps\ge 0$ a.e. on $\Om\times\bR^N$ for each $\eps$,

\noindent
(ii) $g_\eps\to g$ weakly in $L^1_{loc}(\Om\times\bR^N;Mdxdv)$ as $\eps\to 0$,

\noindent
(iii) and
$$
\frac1{\eps^2}H(M(1+\eps g_\eps)|M)\to\tfrac12\iint_{\Om\times\bR^N}g(x,v)^2M(v)dxdv
$$
as $\eps\to 0$.
\end{Def}

We recall that, if $g_\eps\to g$ at order $\eps$, then, for each compact $K\subset\Om$, one has
$$
\int_K\int_{\bR^N}(1+|v|^2)|g_\eps(x,v)-g(x,v)|M(v)dvdx\to 0\quad\hbox{ as }\eps\to 0\,.
$$
In other words, entropic convergence implies strong $L^1$ convergence with the weight $(1+|v|^2)M(v)$ (see Proposition 4.11 in \cite{BGL2}.)

\smallskip
Whenever the incompressible Euler equations (\ref{IncEul}) have a classical solution $u$ on $[0,T]\times\Om$, using the weak-strong uniqueness
property of dissipative solutions and the conservation of energy satisfied by classical solutions of (\ref{IncEul}), we arrive at the following stronger
convergence result, which is a straightforward consequence of Theorem \ref{T-Eul<B}. The interested reader is referred to the proof of Theorem 6.2
in \cite{BGL2} where the squeezing argument leading from weak compactness to entropic convergence is explained in detail.

\begin{Cor}
Consider a family $F^{in}_\eps\equiv F^{in}_\eps(x,v)\ge 0$ a.e. of measurable functions on $\Om\times\bR^N$ such that
$$
\frac{F^{in}_\eps(x,v)-M(v)}{\eps M(v)}\to u^{in}(x)\cdot v
$$
entropically of order $\eps$ as $\eps\to 0$, where $u^{in}\in\cH(\Om)$ is smooth enough so that the initial-boundary value problem (\ref{IncEul})
has a classical solution $u$ defined on the time interval $[0,T]$ with $T>0$.

For each $\eps>0$, let $F_\eps$ be a renormalized solution of (\ref{BoltzIEul})-(\ref{AccomEul})-(\ref{BoltzCondin}) satisfying the local momentum
conservation law (\ref{ConsMom}) for each $w\in C^1_c(\bR_+\times\overline\Om)$ that satisfies $\Div_xw=0$ and $w\cdot n_x\rstr_{\d\Om}=0$.
Assume that the accommodation parameter $\a$ in the accommodation condition (\ref{AccomEul}) at the boundary depends on the scaling parameter
$\eps$ in such a way that
$$
\a(\eps)=o(\eps)\hbox{ as }\eps\to 0\,.
$$

Then
$$
\frac{F_\eps(t,x,v)-M(v)}{\eps M(v)}\to u(t,x)\cdot v
$$
entropically of order $\eps$ as $\eps\to 0$, for a.e. $t\in[0,T]$.
\end{Cor}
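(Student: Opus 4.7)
My plan is to combine the weak-level information from Theorem \ref{T-Eul<B} with a squeezing argument in the style of Theorem 6.2 of \cite{BGL2}, upgrading weak convergence of the macroscopic velocity to full entropic convergence of the kinetic fluctuation $g_\eps := (F_\eps - M)/(\eps M)$. First, by Theorem \ref{T-Eul<B} the family $\tfrac{1}{\eps}\int_{\bR^N} v F_\eps\, dv$ is relatively compact in $L^\infty(\bR_+; L^1(K))$ for each compact $K\subset\overline\Om$, and each limit point is a dissipative solution of (\ref{IncEul}) with initial datum $u^{in}$. Since $u$ is a \emph{classical} solution on $[0,T]$, the weak-strong uniqueness property of dissipative solutions forces every such limit point to equal $u$ throughout $[0,T]$; hence the whole family converges to $u$ there.

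For the entropic upper bound, the entropic convergence of the initial data gives
\[
\tfrac{1}{\eps^2} H(F^{in}_\eps | M) \longrightarrow \tfrac12 \iint_{\Om \times \bR^N}(u^{in}(x) \cdot v)^2 M(v)\, dv dx = \tfrac12 \|u^{in}\|_{L^2(\Om)}^2,
\]
using $\int_{\bR^N} v \otimes v\, M(v)\, dv = I$. The relative entropy inequality of Theorem \ref{T-ThmMisch}(c), together with the nonnegativity of $\cP_\eps$ and $\cD\cG_\eps$, implies $\tfrac{1}{\eps^2} H(F_\eps|M)(t) \le \tfrac{1}{\eps^2} H(F^{in}_\eps|M)$ for a.e. $t > 0$. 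Combined with the conservation of energy by the classical Euler solution---$\|u(t)\|_{L^2(\Om)} = \|u^{in}\|_{L^2(\Om)}$ on $[0,T]$---this yields
\[
\limsup_{\eps \to 0} \tfrac{1}{\eps^2} H(F_\eps|M)(t) \le \tfrac12 \|u(t)\|_{L^2(\Om)}^2.
\]

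For the matching lower bound, the uniform bound on $\tfrac{1}{\eps^2} H(F_\eps|M)(t)$ yields weak relative compactness of $g_\eps$ in $L^1_{loc}(Mdvdx)$; let $g$ be a weak limit along some subsequence. By the lower semicontinuity of the relative entropy along such convergence---an immediate consequence of the Legendre duality $h(z) = \sup_\zeta (\zeta z - h^*(\zeta))$ already invoked in the proof of estimate d)---one has
\[
\tfrac12 \iint_{\Om \times \bR^N} g(t,x,v)^2 M(v)\, dv dx \le \liminf_{\eps \to 0} \tfrac{1}{\eps^2} H(F_\eps|M)(t).
\]
The velocity convergence established above pins $\int_{\bR^N} v\, g(t,x,v) M(v)\, dv = u(t,x)$, so if $P$ denotes the $L^2(Mdv)$-orthogonal projection onto $\Span\{v_1, \ldots, v_N\}$, Pythagoras gives $\iint g^2 M\, dv dx \ge \iint (Pg)^2 M\, dv dx = \|u(t)\|_{L^2(\Om)}^2$. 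Chaining with the upper bound forces equality throughout: $g = Pg = u(t,x) \cdot v$ and $\tfrac{1}{\eps^2} H(F_\eps|M)(t) \to \tfrac12 \|u(t)\|_{L^2(\Om)}^2$, which is precisely condition (iii) of entropic convergence of $g_\eps$ to $u(t,\cdot) \cdot v$ at order $\eps$.

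The main obstacle is exactly this squeezing step: equality in the Pythagorean bound is what kills the potential density and temperature components of $g$, and it holds only because the initial fluctuation is purely kinetic (entropic flatness in the $\rho^{in}$ and $\th^{in}$ directions) and because the classical Euler solution saturates energy conservation. Weakening either hypothesis---nonzero initial density or temperature fluctuations, or a merely dissipative limit flow---opens a gap between the upper and lower bounds and destroys the identification $g = u \cdot v$. A remaining technical point, that the $t$-a.e. weak limit is reached along the same subsequence that furnishes Theorem \ref{T-Eul<B}, follows from the non-increasingness in $t$ of $H(F_\eps|M)(t)/\eps^2$ built into the entropy dissipation inequality.
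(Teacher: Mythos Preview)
Your proposal is correct and follows essentially the same route the paper sketches: invoke Theorem~\ref{T-Eul<B} and weak-strong uniqueness to identify the limit with the classical solution $u$, use the entropy inequality together with conservation of energy for $u$ to get the upper bound, and run the squeezing argument of \cite[Theorem~6.2]{BGL2} (lower semicontinuity plus the Pythagorean projection onto $\Span\{v_1,\ldots,v_N\}$) to force condition~(iii). The paper does not spell out these steps, merely citing \cite{BGL2}, so you have in fact supplied more detail than the paper does.
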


The proof of Theorem \ref{T-Eul<B} above occupies the remaining part of the present section.

\subsection{The relative entropy inequality}

Statement c) in Theorem \ref{T-ThmMisch} bears on the evolution of the relative entropy of the distribution $F_\eps$ with respect to the uniform
Maxwellian $M=\cM_{1,0,1}$. In the next proposition, we consider the evolution of the relative entropy of the distribution $F_\eps$ with respect to
a local Maxwellian of the form $\cM_{1,\eps w,1}$, where $w$ is a solenoidal velocity field on $\Om$ that is tangential to $\d\Om$.

\begin{Prop}
Let $w\in C^1_c(\bR_+\times\overline\Om)$ be such that
$$
\Div_xw=0\quad\hbox{ and }w\cdot n\rstr_{\d\Om}=0\,.
$$
Then, for each $\eps>0$, renormalized solution $F_\eps$ of the initial-boundary value problem (\ref{BoltzIEul})-(\ref{AccomEul})-(\ref{BoltzCondin})
satisfying the momentum conservation identity (\ref{ConsMom}), also satisy the relative entropy inequality
\be\lb{RelEntrIneq}
\ba
\frac1{\eps^2}H&(F_\eps|\cM_{1,\eps w,1})(t)-\frac1{\eps^2}H(F^{in}_\eps|\cM_{1,\eps w(0,\cdot),1})
\\
&\le-\frac1{\eps^{4+q}}\int_0^t\int_\Om\cP_\eps(s,x)dxds-\frac{\a}{\eps^3}\int_0^t\int_{\d\Om}\cD\cG_\eps(s,x)dxds
\\
&-\frac1{\eps^2}\int_0^t\iint_{\Om\times\bR^N}(v-\eps w(s,x))^{\otimes 2}:\grad_xw(s,x)F_\eps(s,x,v)dxdvds
\\
&-\frac1\eps\int_0^t\iint_{\Om\times\bR^N}(v-\eps w(s,x))\cdot E(w)(s,x)F_\eps(s,x,v)dvdxds
\\
&+\frac{\a}{\eps^2}\int_0^t\int_{\d\Om\times\bR^N}(w(s,x)\cdot v)(v\cdot n_x)_+F_\eps(s,x,v)dvdS(x)ds
\ea
\ee
for each $t>0$.
\end{Prop}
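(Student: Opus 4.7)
The plan is to reduce the statement to the absolute entropy inequality (c) of Theorem \ref{T-ThmMisch}, supplemented by the local mass and momentum conservation laws. The starting point is the elementary Gaussian identity
$$
H(F_\eps | \cM_{1,\eps w,1}) = H(F_\eps | M) - \eps \iint_{\Om \times \bR^N}(v\cdot w) F_\eps \, dvdx + \tfrac{\eps^2}{2}\iint_{\Om \times \bR^N}|w|^2 F_\eps \, dvdx,
$$
which follows from $\ln(M/\cM_{1,\eps w,1}) = -\eps\, v\cdot w + \tfrac{\eps^2}{2}|w|^2$ together with the fact that $M$ and $\cM_{1,\eps w,1}$ both have unit $v$-integral. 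Applying this at times $t$ and $0$, subtracting, and dividing by $\eps^2$, the proof reduces to controlling three difference quotients of the form $\eps^{-k}(A(t)-A(0))$.

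For the $\eps^{-2}[H(F_\eps|M)(t) - H(F^{in}_\eps|M)]$ piece I invoke Theorem \ref{T-ThmMisch}(c) directly, which already supplies the $-\eps^{-(4+q)}\int\cP_\eps$ and $-\a\eps^{-3}\int\cD\cG_\eps$ contributions appearing in (\ref{RelEntrIneq}). For the linear piece $-\eps^{-1}[\iint(v\cdot w(t))F_\eps(t) - \iint(v\cdot w(0))F^{in}_\eps]$ I apply the momentum identity (\ref{ConsMom}) to the test field $w$, whose hypotheses $\Div_x w = 0$ and $w\cdot n\rstr_{\d\Om}=0$ are met by assumption; dividing by $\eps^2$ produces the boundary term $+\a\eps^{-2}\int_{\d\Om\times\bR^N}(w\cdot v)(v\cdot n_x)_+ F_\eps$ of (\ref{RelEntrIneq}) together with bulk contributions $-\eps^{-1}\iint v\cdot\d_s w\, F_\eps$ and $-\eps^{-2}\iint v^{\otimes 2}:\grad w\, F_\eps$. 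For the quadratic piece $\tfrac{1}{2}[\iint|w(t)|^2 F_\eps(t) - \iint|w(0)|^2 F^{in}_\eps]$ I test the local mass conservation law from Theorem \ref{T-ThmMisch}(b) against the scalar weight $\tfrac{|w|^2}{2}$, integrating by parts in time using $\d_s(|w|^2/2) = w\cdot\d_s w$ and in space using $\Div w = 0$; the boundary term in the spatial integration by parts vanishes thanks to $\int vF_\eps dv\cdot n_x = 0$ on $\d\Om$, leaving
$$
\tfrac{1}{2}\iint|w(t)|^2 F_\eps(t)\,dvdx - \tfrac{1}{2}\iint|w(0)|^2 F^{in}_\eps \,dvdx = \int_0^t\iint w\cdot\d_s w\, F_\eps\,dvdxds + \tfrac{1}{\eps}\int_0^t\iint (v\otimes w):\grad w\, F_\eps\,dvdxds.
$$

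Summing the three contributions produces an inequality whose right-hand side I still have to recognize as that of (\ref{RelEntrIneq}). This is a short algebraic check: expanding
$$
(v-\eps w)^{\otimes 2}:\grad w = v^{\otimes 2}:\grad w - \eps(v\otimes w+w\otimes v):\grad w + \eps^2(w\otimes w):\grad w
$$
and
$$
(v-\eps w)\cdot E(w) = v\cdot\d_t w + v\cdot (w\cdot\grad)w - \eps w\cdot\d_t w - \eps w\cdot (w\cdot\grad)w,
$$
and using the pointwise identities $w\otimes v:\grad w = v\cdot (w\cdot\grad)w$ and $w\otimes w:\grad w = w\cdot(w\cdot\grad)w$, one sees that the $(w\cdot\grad)w$ contributions from the momentum step and the $v\otimes w:\grad w$ contribution from the mass step cancel exactly the cross terms of the above expansions; the bulk then collapses into the compact form $-\eps^{-2}\iint(v-\eps w)^{\otimes 2}:\grad w\, F_\eps - \eps^{-1}\iint(v-\eps w)\cdot E(w)\, F_\eps$ appearing in (\ref{RelEntrIneq}).

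The main obstacle is not algebraic but the question of whether these moment manipulations are legitimate for renormalized solutions, since polynomial moments in $v$ of $F_\eps$ are not a priori available. For the momentum step this is exactly the reason the proposition postulates that $F_\eps$ satisfies (\ref{ConsMom}); for the mass step it suffices to test the local conservation law of (b), which is part of the renormalized framework, against the smooth compactly supported scalar weight $\tfrac{|w|^2}{2}$. Finiteness of $\iint|w|^2 F_\eps$ itself is furnished by the Young-type bound used in the proof of estimate (d), applied with a $\chi\in C_c(\overline\Om)$ dominating $\tfrac{|w|^2}{2}$ and controlled by the initial relative entropy.
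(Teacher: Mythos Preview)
Your proof is correct and follows essentially the same approach as the paper: both start from the Gaussian identity $H(F_\eps|\cM_{1,\eps w,1})=H(F_\eps|M)+\iint F_\eps(\tfrac{\eps^2}{2}|w|^2-\eps v\cdot w)\,dxdv$, invoke statement (c) of Theorem \ref{T-ThmMisch} for the $H(F_\eps|M)$ difference, the mass conservation law (b) tested against $\tfrac12|w|^2$ for the quadratic piece, and the assumed momentum identity (\ref{ConsMom}) for the linear piece, before collapsing the bulk terms via the same pointwise algebraic identity $(\eps v\cdot\d_t w+v^{\otimes 2}:\grad_x w)-w\cdot(\eps^2\d_t w+\eps v\cdot\grad_x w)=(v-\eps w)^{\otimes 2}:\grad_x w+\eps(v-\eps w)\cdot E(w)$.
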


\begin{proof}
We begin with the straightforward identity
$$
\ba
H(F_\eps|\cM_{1,\eps w,1})=H(F_\eps|M)+\iint_{\Om\times\bR^N}F_\eps\ln\left(\frac{M}{\cM_{1,\eps w,1}}\right)dxdv
\\
=H(F_\eps|M)+\iint_{\Om\times\bR^N}F_\eps(\tfrac12|v-\eps w|^2-\tfrac12|v|^2)dxdv
\\
=H(F_\eps|M)+\iint_{\Om\times\bR^N}F_\eps(\tfrac12\eps^2|w|^2-\eps v\cdot w)dxdv\,.
\ea
$$
Thus
\be\lb{EntrId}
\ba
H(F_\eps|\cM_{1,\eps w,1})(t)-H(F_\eps|\cM_{1,\eps w,1})(0)=H(F_\eps|M)(t)-H(F_\eps|M)(0)
\\
+\tfrac12\eps^2\iint_{\Om\times\bR^N}F_\eps(t,x,v)|w(t,x)|^2dxdv
\\
-\tfrac12\eps^2\iint_{\Om\times\bR^N}F^{in}_\eps(x,v)|w(0,x)|^2dxdv
\\
-\eps\iint_{\Om\times\bR^N}F_\eps(t,x,v)v\cdot w(t,x)dxdv
\\
+\eps\iint_{\Om\times\bR^N}F^{in}_\eps(x,v)v\cdot w(0,x)dxdv\,.
\ea
\ee
According to the continuity equation  in statement a) in Theorem \ref{T-ThmMisch}
\be\lb{ConsMass}
\ba
+\tfrac12\eps^2\iint_{\Om\times\bR^N}F_\eps(t,x,v)|w(t,x)|^2dxdv
\\
-\tfrac12\eps^2\iint_{\Om\times\bR^N}F^{in}_\eps(x,v)|w(0,x)|^2dxdv
\\
=\int_0^t\iint_{\Om\times\bR^N}F_\eps(\eps^2\d_t+\eps v\cdot\grad_x)\tfrac12|w|^2dxdvds
\\
=\int_0^t\iint_{\Om\times\bR^N}F_\eps w\cdot(\eps^2\d_tw+\eps v\cdot\grad_xw)dxdvds\,.
\ea
\ee
In (\ref{EntrId}), we replace the term $H(F_\eps|M)(t)-H(F_\eps|M)(0)$ with the right hand side of the inequality of statement c) of Theorem 
\ref{T-ThmMisch}, and use (\ref{ConsMass}) together with (\ref{ConsMom}) to arrive at
\be\lb{RelEntrIneq1}
\ba
H(F_\eps&|\cM_{1,\eps w,1})(t)-H(F_\eps|\cM_{1,\eps w,1})(0)
\\
&\le-\frac1{\eps^{2+q}}\int_0^t\int_\Om\cP_\eps(s,x)dxds-\frac\a\eps\int_0^t\int_{\d\Om}\cD\cG_\eps(s,x)dxds
\\
&+\int_0^t\iint_{\Om\times\bR^N}w\cdot(\eps^2\d_tw+\eps v\cdot\grad_xw)(s,x)F_\eps(s,x,v)dxdvds
\\
&-\int_0^t\iint_{\Om\times\bR^N}(\eps v\cdot\d_tw(s,x)+v^{\otimes 2}:\grad_xw(s,x))F_\eps(s,x,v)dvdxds
\\
&+\a\int_0^t\int_{\d\Om\times\bR^N}(w(s,x)\cdot v)(v\cdot n_x)_+F_\eps(s,x,v)dvdS(x)ds\,.
\ea
\ee
Next we express $\d_tw$ in terms of $E(w)=\d_tw+w\cdot\grad_xw$ and $\grad_xw$: thus
$$
\ba
(\eps v\cdot\d_tw(s,x)+v^{\otimes 2}:\grad_xw(s,x))-w\cdot(\eps^2\d_tw+\eps v\cdot\grad_xw)&
\\
=(v-\eps w)^{\otimes 2}:\grad_xw+\eps(v-\eps w)\cdot E(w)&\,.
\ea
$$
In the right hand side of (\ref{RelEntrIneq1}), we substitute
$$
\ba
\int_0^t\iint_{\Om\times\bR^N}w\cdot(\eps^2\d_tw+\eps v\cdot\grad_xw)(s,x)F_\eps(s,x,v)dxdvds
\\
-\int_0^t\iint_{\Om\times\bR^N}(\eps v\cdot\d_tw(s,x)+v^{\otimes 2}:\grad_xw(s,x))F_\eps(s,x,v)dvdxds
\\
=-\int_0^t\iint_{\Om\times\bR^N}(v-\eps w(s,x))^{\otimes 2}:\grad_xw(s,x)F_\eps(s,x,v)dxdvds
\\
\eps\int_0^t\iint_{\Om\times\bR^N}(v-\eps w(s,x))\cdot E(w)(s,x)F_\eps(s,x,v)dvdxds
\ea
$$
and arrive at the relative entropy inequality (\ref{RelEntrIneq}).
\end{proof}

\subsection{Control of the boundary term}

The relative entropy  inequality (\ref{RelEntrIneq}) is the same as in the one considered in \cite{BGP}, \cite{LionsMas2} and \cite{LSR2}, except
for the boundary term --- i.e. the last term on the right hand side, which is in general not nonpositive. Since the effect of the boundary is our main
interest in this paper, and the only difference with the case of the Cauchy problem treated in \cite{LSR2}, the core of our argument is to obtain a
control of that term.

\begin{Lem}\lb{L-BoundIneq}
With the notations of Theorem \ref{T-ThmMisch}, for each $\eps>0$ and each $w\in C^1_c(\bR_+\times\overline\Om)$ that is tangential on the
boundary $\d\Om$, each renormalized solution $F_\eps$ of the initial-boundary value problem (\ref{BoltzIEul})-(\ref{AccomEul})-(\ref{BoltzCondin}) satisfies the inequality
\be\lb{BoundIneq}
\ba
\frac{\a}{\eps^2}&\int_{\bR^N}(w(s,x)\cdot v)(v\cdot n_x)_+F_\eps(s,x,v)dv
\\
&=
\frac{\a}{\eps}\int_{\bR^N}(w(s,x)\cdot v)(v\cdot n_x)_+g_\eps(s,x,v)M(v)dv
\\
&\le\frac{\a}{2\eps^3}\cD\cG_\eps(s,x)+\frac{\a}{\eps}C(w)\indc_{w(s,x)\not=0}\L_x(F_\eps)(s,x)
\ea
\ee
a.e. in $(s,x)\in\bR_+\times\d\Om$, where
$$
C(w):=\tfrac12\int_{\bR^N}(e^{2\|w\|_{L^\infty}|v|}-2\|w\|_{L^\infty}|v|-1)(v_1)_+M(v)dv\,.
$$
\end{Lem}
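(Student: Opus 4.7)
I would first establish the equality in the first two lines of \eqref{BoundIneq} as an algebraic identity. Since $w(s,x)\cdot n_x = 0$ on $\d\Om$, splitting $v = v_\tau + (v\cdot n_x)n_x$ makes $M(v)dv$ factor as a tangential Gaussian in $v_\tau$ times a one-dimensional Gaussian in $v\cdot n_x$, and the tangential integral of $w\cdot v_\tau$ against the centered Gaussian vanishes by oddness. Hence $\int(w\cdot v)(v\cdot n_x)_+ M\,dv = 0$, and substituting $g_\eps M = (F_\eps - M)/\eps$ yields the claimed equality.

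For the inequality, I plan to apply the Young-type inequality
$$
xy \;\le\; x\ln(x/A) - x + A e^y \qquad (x, A > 0,\; y \in \bR)
$$
pointwise in $v$, with $x = F_\eps/M$, the $v$-independent normalization $A = \L_x(F_\eps/M)(s,x)$, and $y = 2\eps\,(w(s,x)\cdot v)$, and then to integrate against $(v\cdot n_x)_+ M(v)\,dv$. The LHS becomes $2\eps\int(w\cdot v)F_\eps(v\cdot n_x)_+\,dv$; by the Gaussian moment-generating-function identity $\L_x(e^{2\eps(w\cdot v)}) = e^{2\eps^2|w|^2}$ (exact because $w\cdot v = w\cdot v_\tau$ and the tangential factor of $M$ is a standard Gaussian), the $Ae^y$ term yields $\L_x(F_\eps/M)\,e^{2\eps^2|w|^2}/\sqrt{2\pi}$, and the $-x$ term yields $-\L_x(F_\eps/M)/\sqrt{2\pi}$. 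The crux is then to recognize that the ``entropy'' contribution
$$
\int \frac{F_\eps}{M}\ln\!\left(\frac{F_\eps/M}{\L_x(F_\eps/M)}\right)(v\cdot n_x)_+ M(v)\,dv \;=\; \cD\cG_\eps
$$
exactly, by expanding $u\ln u = h(u-1) + (u-1)$ with $u = F_\eps/M$, writing $\L_x(F_\eps/M) = 1 + \L_x(F_\eps/M-1)$, and using $(1+\L_x(F_\eps/M-1))\ln(1+\L_x(F_\eps/M-1)) = h(\L_x(F_\eps/M-1)) + \L_x(F_\eps/M-1)$; all contributions cancel save the Darrozes--Guiraud combination $[\L_x(h(F_\eps/M-1)) - h(\L_x(F_\eps/M-1))]/\sqrt{2\pi} = \cD\cG_\eps$.

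Combining these evaluations, dividing by $2\eps$ and multiplying by $\alpha/\eps^2$, I would arrive at
$$
\frac{\alpha}{\eps^2}\int(w\cdot v)(v\cdot n_x)_+ F_\eps\,dv \;\le\; \frac{\alpha\,\cD\cG_\eps}{2\eps^3} + \frac{\alpha\,\L_x(F_\eps/M)\bigl(e^{2\eps^2|w(s,x)|^2} - 1\bigr)}{2\eps^3\sqrt{2\pi}}.
$$
To recast the second term as $(\alpha/\eps)C(w)\indc_{w\ne 0}\L_x(F_\eps)$, I would bound the exponential remainder using $e^z - 1 \le z\,e^z$ with $z = 2\eps^2|w|^2$ (which absorbs the factor $\eps^2$), use $|w(s,x)|^2 \le \|w\|_{L^\infty}^2$, and compare $|w|^2/\sqrt{2\pi}$ to $C(w)$ via the convexity bound $h^*(z) \ge z^2/2$ for $z\ge 0$ and the rotational-invariance identity $\int|v|^2(v_1)_+ M\,dv = \int|v|^2(v\cdot n_x)_+ M\,dv = (N+1)/\sqrt{2\pi}$. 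When $w(s,x) = 0$, the LHS vanishes identically, so the indicator $\indc_{w(s,x)\ne 0}$ is harmless.

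\textbf{The main obstacle} is the algebraic identification of the Young-entropy integral with the Darrozes--Guiraud information. A pointwise Young applied directly to $(w\cdot v)\,(F_\eps/M - 1)$ would instead produce an uncontrolled $h(\L_x(F_\eps/M - 1))$ correction that is not dominated by $\cD\cG_\eps$ on its own. The virtue of choosing the $v$-independent normalization $A = \L_x(F_\eps/M)$ in the Young inequality is precisely that this correction is absorbed into the Darrozes--Guiraud Jensen gap, yielding the clean $(1/\eps^3)$ scaling required by the lemma.
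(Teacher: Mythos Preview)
Your Young inequality $xy\le x\ln(x/A)-x+Ae^y$ with $x=F_\eps/M$, $A=\L_x(F_\eps/M)$, $y=2\eps(w\cdot v)$ is exactly the paper's inequality $(z-z_0)p\le l(z-z_0)+l^*(p)$ rewritten in the variables $x=1+z$, $A=1+z_0$, $y=p$: the paper sets $l(z-z_0):=h(z)-h(z_0)-h'(z_0)(z-z_0)$ with $z_0=\eps\L_x(g_\eps)$ and computes $l^*(p)=(1+z_0)h^*(p)$, and a two-line calculation shows the two inequalities coincide. Your identification of the entropy integral with $\cD\cG_\eps$ is also correct and is precisely what the paper obtains from the $l$-term.

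The gap is in your last step. After the Gaussian evaluation you must show
$$
\frac{e^{2\eps^2|w(s,x)|^2}-1}{2\eps^2\sqrt{2\pi}}\le C(w)\,.
$$
Your chain $e^z-1\le ze^z$ followed by $h^*(\zeta)\ge\zeta^2/2$ produces $\dfrac{|w|^2e^{2\eps^2|w|^2}}{\sqrt{2\pi}}$ on one side and the lower bound $C(w)\ge\|w\|_{L^\infty}^2(N+1)/\sqrt{2\pi}$ on the other; these are not comparable, because the leftover factor $e^{2\eps^2|w|^2}\ge 1$ forces the unwanted constraint $e^{2\eps^2\|w\|_{L^\infty}^2}\le N+1$, which fails for large $\|w\|_{L^\infty}$. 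The paper avoids this by \emph{not} evaluating the Gaussian integral: it keeps the $l^*$-contribution as
$$
\L_x(F_\eps/M)\cdot\frac{1}{\eps^2}\int_{\bR^N}h^*(2\eps\,w\cdot v)(v\cdot n_x)_+M\,dv
$$
and bounds the integrand pointwise via $h^*(2\eps\,w\cdot v)\le h^*(2\eps\|w\|_{L^\infty}|v|)$ together with the monotonicity of $\eps\mapsto(e^{a\eps}-a\eps-1)/\eps^2$, which yields $C(w)$ directly. In your framework the clean fix is to undo your Gaussian evaluation,
$$
\frac{e^{2\eps^2|w|^2}-1}{2\eps^2\sqrt{2\pi}}
=\frac{1}{2\eps^2}\int_{\bR^N}h^*(2\eps\,w\cdot v)(v\cdot n_x)_+M\,dv
$$
(using once more that $\int(w\cdot v)(v\cdot n_x)_+M\,dv=0$), and then apply that same pointwise monotonicity bound.
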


We use Young's inequality for a translate of the function $h$ defined in Theorem \ref{T-ThmMisch}, much in the same way as in the proof of
Theorem 6.2 in \cite{BGL2} (see especially pp. 738--739 there).

\begin{proof}
Let $z_0>-1$; for each $z>-1$, set
$$
l(z-z_0):=h(z)-h(z_0)-h'(z_0)(z-z_0)\,.
$$
We recall that the Legendre dual of the function $h$ defined in Theorem \ref{T-ThmMisch} is
$$
h^*(p):=\sup_{z>-1}(pz-h(z))=e^p-p-1\,,\quad p\in\bR\,.
$$
A straightforward computation shows that
$$
\ba
l^*(p)=&\sup_{z>-1}(p(z-z_0)-l(z-z_0))
\\
=&\sup_{z>-1}(p(z-z_0)-h(z)+h(z_0)+h'(z_0)(z-z_0))
\\
=&h(z_0)-(h'(z_0)+p)z_0+\sup_{z>-1}((h'(z_0)+p)z-h(z))
\\
=&h(z_0)-(h'(z_0)+p)z_0+h^*(h'(z_0)+p)
\\
=&h(z_0)-(h'(z_0)+p)z_0+e^{h'(z_0)}e^p-h'(z_0)-p-1
\\
=&(1+z_0)\ln(1+z_0)-z_0-(\ln(1+z_0)+p)z_0
\\
&+(1+z_0)e^p-\ln(1+z_0)-p-1
\\
=&(1+z_0)(e^p-p-1)=(1+z_0)h^*(p)\,.
\ea
$$

Writing $F_\eps=M(1+\eps g_\eps)$ and observing that
\be\lb{L(w.v)}
\int_{\bR^N}(w(s,x)\cdot v)(v\cdot n_x)_+M(v)dv=0\,,\quad (s,x)\in\bR_+\times\d\Om
$$
since $w$ is tangential on $\d\Om$, one has
$$
\ba
\int_{\bR^N}(w(s,x)\cdot v)&(v\cdot n_x)_+F_\eps(s,x,v)dv
\\
=\eps&\int_{\bR^N}(w(s,x)\cdot v)(v\cdot n_x)_+g_\eps(s,x,v)M(v)dv\,.
\ea
\ee
Then
$$
\ba
\int_{\bR^N}&(w(s,x)\cdot v)(v\cdot n_x)_+g_\eps(s,x,v)M(v)dv
\\
&=
\int_{\bR^N}(w(s,x)\cdot v)(v\cdot n_x)_+(g_\eps(s,x,v)-\L_x(g_\eps))M(v)dv
\ea
$$
by (\ref{L(w.v)}) since $\L_x(g_\eps)$ is independent of $v$. 

By definition Young's inequality (or equivalently, by definition of the Legendre dual of $l$), 
$$
2\eps^2(g_\eps-\L_x(g_\eps))(w\cdot v)\le l(\eps(g_\eps-\L_x(g_\eps)))+l^*(\eps(2w\cdot v))\,,
$$
so that
$$
\ba
2\int_{\bR^N}(w\cdot v)(v\cdot n_x)_+g_\eps Mdv
\le
\frac1{\eps^2}\int_{\bR^N}l(\eps(g_\eps-\L_x(\eps)))(v\cdot n_x)_+Mdv&
\\
+
\frac1{\eps^2}\int_{\bR^N}l^*(\eps(2w\cdot v))(v\cdot n_x)_+Mdv&\,.
\ea
$$
First, since $\L_x$ is the average under a probability measure, one has
$$
\ba
\int_{\bR^N}&l(\eps(g_\eps-\L_x(\eps)))(v\cdot n_x)_+Mdv
\\
&=\tfrac1{\sqrt{2\pi}}\L_x(l(\eps(g_\eps-\L_x(\eps))))
\\
&=\tfrac1{\sqrt{2\pi}}\L_x(h(\eps g_\eps)-h(\eps\L_x(g_\eps))-h'(\eps\L_x(g_\eps))(\eps g_\eps-\eps\L_x(g_\eps)))
\\
&=\tfrac1{\sqrt{2\pi}}\L_x(h(\eps g_\eps)-h(\eps\L_x(g_\eps))-h'(\eps\L_x(g_\eps))\tfrac1{\sqrt{2\pi}}\L_x(\eps g_\eps-\eps\L_x(g_\eps)))
\\
&=\tfrac1{\sqrt{2\pi}}(\L_x(h(\eps g_\eps)-h(\eps\L_x(g_\eps)))=\cD\cG_\eps
\ea
$$
On the other hand
$$
\ba
\frac1{\eps^2}&\int_{\bR^N}l^*(\eps(2w\cdot v))(v\cdot n_x)_+Mdv
\\
&=(1+\eps\L(g_\eps))\int_{\bR^N}\frac{e^{2\eps |w||v|}-2\eps |w||v|-1}{\eps^2}(v\cdot n_x)_+Mdv
\\
&\le\L(F_\eps)\int_{\bR^N}(e^{2|w||v|}-2|w||v|-1)(v\cdot n_x)_+Mdv
\ea
$$
--- since, for each $a>0$, the map
$$
\eps\mapsto\frac{e^{a\eps}-a\eps-1}{\eps^2}=a^2\sum_{n\ge 2}\frac{(a\eps)^{n-2}}{n!}
$$
is increasing.

Finally
$$
\ba
\frac{\a}{\eps^2}\int_{\bR^N}(w(s,x)\cdot v)(v\cdot n_x)_+F_\eps(s,x,v)dv
\\
\le
\frac{\a}{\eps}\int_{\bR^N}(w(s,x)\cdot v)(v\cdot n_x)_+g_\eps(s,x,v)M(v)dv
\\
\le\frac{\a}{2\eps^3}\cD\cG_\eps(s,x)+\frac{\a}{\eps}C(w)\indc_{w(s,x)\not=0}\L(F_\eps)(s,x)
\ea
$$
where
$$
C(w):=\tfrac12\int_{\bR^N}(e^{2\|w\|_{L^\infty}|v|}-2\|w\|_{L^\infty}|v|-1)(v_1)_+M(v)dv\,.
$$
\end{proof}

\smallskip
After integrating in $(s,x)$ both sides of (\ref{BoundIneq}), the first term on the right hand side of (\ref{BoundIneq}) will be absorbed by the 
Darrozes-Guiraud information on the right hand side of (\ref{RelEntrIneq}), so that, with Lemma \ref{L-BoundIneq}, the inequality (\ref{RelEntrIneq}) 
is transformed into
\be\lb{RelEntrIneq2}
\ba
\frac1{\eps^2}H&(F_\eps|\cM_{1,\eps w,1})(t)-\frac1{\eps^2}H(F^{in}_\eps|\cM_{1,\eps w(0,\cdot),1})
\\
&\le-\frac1{\eps^{4+q}}\int_0^t\int_\Om\cP_\eps(s,x)dxds-\frac{\a}{2\eps^3}\int_0^t\int_{\d\Om}\cD\cG_\eps(s,x)dxds
\\
&-\frac1{\eps^2}\int_0^t\iint_{\Om\times\bR^N}(v-\eps w(s,x))^{\otimes 2}:\grad_xw(s,x)F_\eps(s,x,v)dxdvds
\\
&-\frac1\eps\int_0^t\iint_{\Om\times\bR^N}(v-\eps w(s,x))\cdot E(w)(s,x)F_\eps(s,x,v)dvdxds
\\
&+\frac{\a}{\eps}C(w)\int_0^t\int_{\d\Om\cap\Supp(w)}\L_x(F_\eps)(s,x)dS(x)ds\,,
\ea
\ee
for all $t\ge 0$.

\subsection{Control of the outgoing mass flux}

In the lemma below, we shall prove that the outgoing mass flux $\L_x(F_\eps)$ is uniformly bounded in $L^1_{loc}(\bR_+\times\d\Om)$, so 
that the last term on the right hand side of (\ref{RelEntrIneq2}) vanishes under the assumption $\a(\eps)=o(\eps)$ as $\eps\to 0$.

\begin{Lem}
With the notations of Theorem \ref{T-ThmMisch}, for each $\eps>0$, each renormalized solution $F_\eps$ of the initial-boundary value problem 
(\ref{BoltzIEul})-(\ref{AccomEul})-(\ref{BoltzCondin}) satisfies the inequality
\be\lb{OutMassFlux}
\ba
\int_{\bR^N}F_\eps(t,x,v)(v\cdot n_x)_+dv\le&\frac1{h(\eta)}\cD\cG_\eps(t,x)&
\\
&+\frac{1}{\sqrt{2\pi}(1-\eta)}\int_{\bR^N}F_\eps(v\cdot n_x)^2dv
\ea
\ee
for all $\eta\in(0,1)$, a.e. in $(t,x)\in\bR_+\times\d\Om$.
\end{Lem}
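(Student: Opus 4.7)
The plan is to separate the integrand into a region where $F_\eps/M$ is large (where the Darrozes-Guiraud term absorbs the contribution) and its complement (where the quadratic moment suffices). Set $f=F_\eps/M-1$ so that the outgoing flux is $\L_x(F_\eps/M)/\sqrt{2\pi}$ and $\sqrt{2\pi}\cD\cG_\eps=\L_x(h(f))-h(\L_x(f))$, while the quantity to bound equals $(1+\L_x(f))/\sqrt{2\pi}$.

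The key pointwise estimate I would establish is
$$(1+f)\,h(\eta)\le h(f)\quad\hbox{on the set } A:=\{F_\eps\ge M/(1-\eta)\}=\{f\ge\eta/(1-\eta)\}.$$
At the threshold $f=\eta/(1-\eta)$ this reduces to $(1+\eta)\ln(1+\eta)+\ln(1-\eta)\le 0$, which I verify by checking that both the function and its first derivative vanish at $\eta=0$ while its second derivative $1/(1+\eta)-1/(1-\eta)^2$ is strictly negative on $(0,1)$; the extension to all $f\ge\eta/(1-\eta)$ then follows from monotonicity, since $\frac{d}{df}[h(f)-(1+f)h(\eta)]=\ln(1+f)-h(\eta)\ge-\ln(1-\eta)-h(\eta)\ge 0$ on that range.

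Using this, I split $\int F_\eps(v\cdot n_x)_+dv=\int_A+\int_{A^c}$. On $A$ the pointwise inequality, multiplied by $(v\cdot n_x)_+/h(\eta)$ and integrated, yields $\int_A F_\eps(v\cdot n_x)_+dv\le\L_x(h(f))/[h(\eta)\sqrt{2\pi}]$, which by the Darrozes-Guiraud identity equals $\cD\cG_\eps/h(\eta)+h(\L_x(f))/[h(\eta)\sqrt{2\pi}]$. On $A^c$, where $F_\eps<M/(1-\eta)$, I would combine this bound with a Young/AM-GM inequality comparing $(v\cdot n_x)_+$ with $(v\cdot n_x)^2$ (e.g.\ $(v\cdot n_x)_+\le(v\cdot n_x)^2/[\sqrt{2\pi}(1-\eta)]+\sqrt{2\pi}(1-\eta)/4$) to obtain an estimate by $\int F_\eps(v\cdot n_x)^2dv/[\sqrt{2\pi}(1-\eta)]$ plus lower-order terms coming from $F_\eps\le M/(1-\eta)$.

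The hardest part will be matching constants exactly: the residual $h(\L_x(f))/[h(\eta)\sqrt{2\pi}]$ from the $A$-piece and any additive constant produced on $A^c$ must be absorbed into the quadratic moment term $\int F_\eps(v\cdot n_x)^2dv/[\sqrt{2\pi}(1-\eta)]$ without degrading either the coefficient $1/h(\eta)$ or $1/[\sqrt{2\pi}(1-\eta)]$. I expect this to require either a sharpening of the $A^c$ estimate using the identity $\int F_\eps(v\cdot n_x)^2dv=1+\int fM(v\cdot n_x)^2dv$ together with the fact that $\L_x(f)$ is a $(v\cdot n_x)_+M$-weighted average (so it is controlled by the outgoing second moment), or an altogether sharper two-parameter Young inequality tailored to force the cancellation of the $h(\L_x(f))$ contribution.
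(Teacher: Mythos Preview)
Your pointwise inequality $(1+f)h(\eta)\le h(f)$ on $\{f\ge\eta/(1-\eta)\}$ is correct, but the decomposition it leads to has a genuine gap that you have identified but not closed: the residual $h(\L_x(f))/[h(\eta)\sqrt{2\pi}]$ cannot be absorbed into the right-hand side of the lemma. When the outgoing flux is large this term behaves like $\L_x(G_\eps)\ln\L_x(G_\eps)$, i.e.\ it grows \emph{faster} than the quantity $\L_x(G_\eps)/\sqrt{2\pi}$ you are trying to bound, so the estimate becomes circular. Neither $\cD\cG_\eps$ nor $\int F_\eps(v\cdot n_x)^2dv$ controls $h(\L_x(f))$ a priori, and your proposed repairs do not help: the identity $\int F_\eps(v\cdot n_x)^2dv=1+\int fM(v\cdot n_x)^2dv$ does not recover $\L_x(f)$ because the weights $(v\cdot n_x)_+$ and $(v\cdot n_x)^2$ are not comparable for small $v\cdot n_x$, and any Young-type splitting of $(v\cdot n_x)_+$ on $A^c$ produces an additive constant rather than cancelling the residual.

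The paper avoids both difficulties by two changes of viewpoint. First, it splits on $|G_\eps/\L_x G_\eps-1|\lessgtr\eta$ rather than on $|G_\eps-1|$: comparing to the \emph{average} $\L_x G_\eps$ instead of to $1$ makes the ``far'' contribution match exactly the integrand $\L_x G_\eps\,h(G_\eps/\L_x G_\eps-1)$ of the Darrozes--Guiraud information, so no $h(\L_x(f))$ term is left over. Second, rather than estimating $\int F_\eps(v\cdot n_x)_+dv$ directly, it bounds the auxiliary quantity
\[
I:=\L_x G_\eps\int_{\bR^N}\big((v\cdot n_x)_+^2\wedge 1\big)M\,dv\,;
\]
since the weight satisfies $(v\cdot n_x)_+^2\wedge 1\le(v\cdot n_x)^2$, on the ``near'' set one replaces $\L_x G_\eps$ by $G_\eps/(1-\eta)$ and lands immediately on the quadratic moment. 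Dividing by the fixed constant $\int((v\cdot n_x)_+^2\wedge 1)M\,dv$ then gives the lemma. The missing idea in your approach is precisely this comparison to $\L_x G_\eps$; once you split relative to the average, the residual you are struggling with never appears.
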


\begin{proof}
First we recast the Darrozes-Guiraud information in the form 
\be\lb{DGbis}
\ba
\cD\cG_\eps&=\tfrac1{\sqrt{2\pi}}\L_x(h(\eps g_\eps)-h(\eps\L_x(g_\eps)))
\\
&=\tfrac1{\sqrt{2\pi}}\L_x(G_\eps\ln G_\eps-G_\eps-\L_x(G_\eps)\ln\L_x(G_\eps)+\L_x(G_\eps))
\\
&=\tfrac1{\sqrt{2\pi}}\L_x\left(G_\eps\ln\left(\frac{G_\eps}{\L_x(G_\eps)}\right)-G_\eps+\L_x(G_\eps)\right)\,,
\ea
\ee
since
$$
\L_x((G_\eps-\L_x(G_\eps))\ln\L_x(G_\eps))=\L_x((G_\eps-\L_x(G_\eps)))\ln\L_x(G_\eps)=0\,.
$$

Then we consider the integral
$$
\ba
I:&=\L_xG_\eps\int_{\bR^N}(v\cdot n_x)_+^2\wedge 1Mdv
\\
&=\L_xG_\eps\int_{\bR^N}\indc_{|G_\eps/\L_xG_\eps -1|>\eta}(v\cdot n_x)_+^2\wedge 1Mdv
\\
&+\L_xG_\eps\int_{\bR^N}\indc_{|G_\eps/\L_xG_\eps -1|\le\eta}(v\cdot n_x)_+^2\wedge 1Mdv
=:I_1+I_2
\ea
$$
avec $\eta\in]0,1[$.

The first term is estimated in terms of the Darrozes-Guiraud information on the boundary, in view of (\ref{DGbis}):
$$
\ba
I_1&\le\frac1{h(\eta)}\L_xG_\eps\int_{\bR^N}\indc_{|G_\eps/\L_xG_\eps -1|>\eta}
	h\left(\left|\frac{G_\eps}{\L_xG_\eps}-1\right|\right)(v\cdot n_x)_+^2\wedge 1Mdv
\\
&\le
\frac1{h(\eta)}\L_xG_\eps\int_{\bR^N}h\left(\frac{G_\eps}{\L_xG_\eps}-1\right)(v\cdot n_x)_+^2\wedge 1Mdv
\\
&\le
\frac1{h(\eta)}\int_{\bR^N}\left(G_\eps\ln\left(\frac{G_\eps}{\L_xG_\eps}\right)-G_\eps+\L_xG_\eps\right)(v\cdot n_x)_+Mdv
\\
&\le\frac{\sqrt{2\pi}}{h(\eta)}\cD\cG_\eps(t,x)\,.
\ea
$$
As for the second term, one has
$$
\ba
I_2&=\L_xG_\eps\int_{\bR^N}\indc_{|G_\eps/\L_xG_\eps -1|\le\eta}(v\cdot n_x)_+^2\wedge 1Mdv
\\
&\le\frac1{1-\eta}\int_{\bR^N}G_\eps\indc_{|G_\eps/\L_xG_\eps -1|\le\eta}(v\cdot n_x)_+^2\wedge 1Mdv
\\
&\le\frac1{1-\eta}\int_{\bR^N}G_\eps(v\cdot n_x)^2Mdv\,.
\ea
$$
Putting together both estimates gives
$$
I=I_1+I_2\le\frac{\sqrt{2\pi}}{h(\eta)}\cD\cG_\eps(t,x)+\frac1{1-\eta}\int_{\bR^N}F_\eps(v\cdot n_x)^2dv\,.
$$
Since 
$$
\int_{\bR^N}(v\cdot n_x)_+^2\wedge 1Mdv=\int_0^1 z^2e^{-z^2/2}\frac{dz}{\sqrt{2\pi}}=:J>0
$$
is independent of $x$, we conclude that
$$
\int_{\bR^N}F_\eps(t,x,v)(v\cdot n_x)_+dv=\tfrac1{\sqrt{2\pi}}\L_x(G_\eps)(t,x)=\tfrac1{\sqrt{2\pi}}\frac{I}{J}
$$
which, together with the previous inequality, leads to the announced estimate.
\end{proof}

\subsection{Convergence to the incompressible Euler equations}

At this point bring together the relative entropy inequality (\ref{RelEntrIneq}) and the boundary control (\ref{BoundIneq}), thereby arriving
at the estimate
$$
\ba
\frac1{\eps^2}H&(F_\eps|\cM_{1,\eps w,1})(t)-\frac1{\eps^2}H(F^{in}_\eps|\cM_{1,\eps w(0,\cdot),1})
\\
&\le-\frac1{\eps^{4+q}}\int_0^t\int_\Om\cP_\eps(s,x)dxds-\frac{\a}{2\eps^3}\int_0^t\int_{\d\Om}\cD\cG_\eps(s,x)dxds
\\
&-\frac1{\eps^2}\int_0^t\iint_{\Om\times\bR^N}(v-\eps w(s,x))^{\otimes 2}:\grad_xw(s,x)F_\eps(s,x,v)dxdvds
\\
&-\frac1\eps\int_0^t\iint_{\Om\times\bR^N}(v-\eps w(s,x))\cdot E(w)(s,x)F_\eps(s,x,v)dvdxds
\\
&+\frac{\a}{\eps}C(w)\int_0^t\int_{\d\Om\cap\Supp(w)}\L_x(F_\eps)(s,x)dS(x)ds\,.
\ea
$$
Next, we use the pointwise inequality (\ref{OutMassFlux}) with, say, $\eta=\tfrac12$, to control the last integral on the right-hand side above:
\be\lb{RelEntrIneqFinal}
\ba
\frac1{\eps^2}H&(F_\eps|\cM_{1,\eps w,1})(t)-\frac1{\eps^2}H(F^{in}_\eps|\cM_{1,\eps w(0,\cdot),1})
\\
&\le-\frac1{\eps^{4+q}}\int_0^t\int_\Om\cP_\eps(s,x)dxds
\\
&-\frac{\a}{2\eps^3}(1-\tfrac{2\sqrt{2\pi}}{h(1/2)}C(w)\eps)\int_0^t\int_{\d\Om}\cD\cG_\eps(s,x)dxds
\\
&-\frac1{\eps^2}\int_0^t\iint_{\Om\times\bR^N}(v-\eps w(s,x))^{\otimes 2}:\grad_xw(s,x)F_\eps(s,x,v)dxdvds
\\
&-\frac1\eps\int_0^t\iint_{\Om\times\bR^N}(v-\eps w(s,x))\cdot E(w)(s,x)F_\eps(s,x,v)dvdxds
\\
&+\frac{2\a}{\eps}C(w)\int_0^t\int_{\d\Om\cap\Supp(w)}\int_{\bR^N}F_\eps(v\cdot n_x)^2dvdS(x)ds\,.
\ea
\ee
Now, statement d) in Theorem \ref{T-ThmMisch} and the scaling assumption on the accommodation parameter, i.e. $\a(\eps)=o(\eps)$, show 
that, for each $T>0$
\be\lb{EstFvn2}
\frac{2\a}{\eps}C(w)\int_0^t\int_{\d\Om\cap\Supp(w)}\int_{\bR^N}F_\eps(v\cdot n_x)^2dvdS(x)ds\to 0
\ee
uniformly in $t\in[0,T]$ as $\eps\to 0$.

With (\ref{EstFvn2}), the relative entropy inequality (\ref{RelEntrIneqFinal}) is precisely of the same form as the inequality stated as Theorem 5
in \cite{LSR2}. One then concludes by the same argument as in \cite{LSR2}.

\section{Conclusion and final remarks}

As recalled above, the convergence of solutions of the boundary value problem for the Navier-Stokes equations in the large Reynolds number regime 
is an open problem, as well as the validity of the Prandlt equation for the boundary layer.  Convergence to solutions of the Euler Equation, for general 
boundary conditions (including the Dirichlet boundary condition)  are proven under only the most stringent regularity assumptions.

On the other hand the onset of von Karman vortex streets and the Kolmogorov hypothesis on turbulence based on a non zero energy dissipation in 
the large Reynolds (see chapter 5 in \cite{ Frisch}) suggests that, in general, the limit is not a solution of the Euler equation. This is in agreement with 
Kato's criterion \cite{KatoBL} relating the convergence of the solutions of the Navier-Stokes equations with Dirichlet boundary condition to a solution 
of the Euler equations with the vanishing of the viscous energy dissipation in a boundary layer with thickness $O(\Re^{-1})$.

For the Navier slip boundary condition (\ref{SlipFour}) or (\ref{SlipNormDer}), the inviscid limit is established in the present paper is proven under the 
only assumption $\l\to 0$ as $\nu\to 0$. 
 
The following remarks are in order

\begin{itemize}
 
\item at variance with previous results \cite{BeiraoCrispo1,XiaoXin}, no regularity assumption is required for all the results in the present paper, as 
only estimates are used in the proof;

\item what is proved here is the convergence to a dissipative solution (hence to the unique classical whenever it exists); therefore, this convergence 
is also true even if no classical solution exists, or even if the $L^2$ initial data corresponds to a ``wild'' solution \`a la C. DeLellis and L. Szkelyhidi 
(see \cite{deLelSzeke}) for which there is no uniqueness of the Euler solution;

\item therefore the main goal of the present paper is to show the strong similarity between the inviscid limit for the Navier-Stokes equations and the 
fluid dynamic limit for the Boltzmann equation; the accommodation coefficient $\a$, the Mach $\Ma$ and Strou- hal $\Sh$ numbers, the Reynolds
number $\Re$ (see \cite{SoneBk2}, \S 1.9), and the slip coefficient $\l$ are related by
$$ 
\Ma=\Sh=\eps\,,\quad\frac1\Re=o(1)\,,\quad\frac{\alpha}{\epsilon}=\l\,,
$$ 
as $\eps\to 0$, so that the conditions $\a=o(\eps)$ as $\eps\to 0$ and $\l\to 0$ as $\nu\to 0$ are consistent.

\end{itemize}
     
In all case the convergence of the solution of the Navier-Stokes equations to a classical solution of the Euler equations will imples that the energy
dissipation vanishes in the limit, as observed by Kato \cite{KatoBL}. Likewise, the entropic convergence obtained in the present paper in the case 
of renormalized solutions of the Boltzmann equation implies that the sum of the entropy dissipation and of the Darrozes-Guiraud information at the 
boundary vanishes with $\eps$.

By analogy with the work of Kato \cite{KatoBL}, a possible conjecture is that, whenever
$$
\varliminf_{\eps\to 0}\frac{\a(\eps)}{\eps}>0\,,
$$
the vanishing of both the Darrozes-Guiraud information at the boundary and of the entropy production implies that the inviscid fluid dynamic limit
of the Boltzmann equation is described by a solution of the Euler equations.

\smallskip
Other, perhaps less delicate problems could be analyzed by the methods used in the present paper. For instance, Maxwell's accommodation is but
one example in a wide class of nonlocal boundary conditions for the Boltzmann equation; the Navier-Stokes and Euler hydrodynamic limits of the
Boltzmann equation should be considered also for such boundary conditions (see \cite{SoneBk2} \S 1.6). Likewise, the condition (\ref{ConsMom}), 
which may not be verified for all renormalized solutions of the initial boundary value problem for the Boltzmann equation should be removed on 
principle (as in \cite{LSR2}).

In other words, there remain many open problems related to the issues discussed in the present paper, to which we shall return in future publications.



\end{document}